\newcommand{\E}[1]{{\mathbb E}\left[#1\right]}
\newcommand{\pl}[1]{\text{#1}}
\newcommand{\eps}{\varepsilon}
\newcommand{\mc}[1]{\mathcal{#1}}
\newtheorem{thm}{Theorem}[section]
\newtheorem{lem}[thm]{Lemma}
\newtheorem{cor}[thm]{Corollary}
\newtheorem{definition}[thm]{Definition}
\newtheorem{conjecture}{Conjecture}[section]
\newtheorem{fact}[thm]{Fact}
\newtheorem{claim}[thm]{Claim}
\newcommand\udist{1.5cm}
\newcommand\vdist{4.0cm}
\newcommand\wdist{5.7cm}
\newcommand\shift{-8.5}
\tikzset{circle split part fill/.style  args={#1,#2}{%
 alias=tmp@name, 
  postaction={%
    insert path={
     \pgfextra{%
     \pgfpointdiff{\pgfpointanchor{\pgf@node@name}{center}}%
                  {\pgfpointanchor{\pgf@node@name}{east}}%
     \pgfmathsetmacro\insiderad{\pgf@x}
      \fill[#1] (\pgf@node@name.base) ([xshift=-\pgflinewidth]\pgf@node@name.east) arc
                          (0:180:\insiderad-\pgflinewidth)--cycle;
      \fill[#2] (\pgf@node@name.base) ([xshift=\pgflinewidth]\pgf@node@name.west)  arc
                           (180:360:\insiderad-\pgflinewidth)--cycle;            
         }}}}}
\title{Sparse halves in dense triangle-free graphs}
\author[1]{Sergey Norin\thanks{snorine@gmail.com}}
\author[2]{Liana Yepremyan\thanks{liana.yepremyan@mail.mcgill.ca}}
\affil[1]{\small{Department of Mathematics and Statistics, McGill University}}
\affil[2]{\small{School of Computer Science, McGill University}}
\date{}
\begin{document}

\maketitle

\begin{abstract}
Erd\H{o}s~\cite{Erdosfirst} conjectured that every  triangle-free graph \(G\) on \(n\) vertices contains a set of \(\lfloor n/2 \rfloor\)  vertices that spans at most \(n^2 /50\) edges. Krivelevich proved the conjecture for graphs with minimum degree at least \(\frac{2}{5}n\) \cite{krivelevich}. In \cite{Sudakov} Keevash and Sudakov improved this result to graphs with average degree at least \(\frac{2}{5}n\). We strengthen these results by showing that the conjecture holds for graphs with minimum degree  at least \( \frac{5}{14}n\) and for graphs with average degree at least \(\left(\frac{2}{5} - \gamma \right)n\) for some absolute \(\gamma >0\). Moreover, we show that  the conjecture is true for graphs which are close to the Petersen graph in edit distance.
\end{abstract}
\begin{keywords}
Triangle-free graph, sparse half, minimum degree, Petersen graph, edit distance, blowup
\end{keywords}

\section{Introduction}

In this paper we consider the edge distribution in triangle-free graphs. A fundamental result in extremal graph theory, Tur\'{a}n's theorem implies that if a graph $G$ on \(n\) vertices has more than $(1-1/r) {n \choose 2}$ edges, then G has $K_{r+1}$ as a subgraph.  One can consider the following generalization of this problem that was first studied by Erd\H{o}s, Faudree,  Rousseau and Schelp \cite{Erdos1} in 1990. Fix \(\alpha \in (0,1]\), and suppose that every set of \(\alpha n\) vertices in $G$ induces more than $\beta n^2$ edges. What is the smallest \(\beta:=\beta(\alpha,r)\) that forces $G$ to contain $K_{r+1}$?

In particular, one of Erd\H{o}s's old and favorite conjectures says that \(\beta(\frac{1}{2}, 2) = \frac{1}{50}\) \cite{Erdosfirst}; he first proposed this in 1975 and offered a \$250 prize for its solution later, in \cite{Erdos3}. It is easy to check that the bound $1/50$ is tight. It is achieved  on  the uniform \emph{blowup} of \(C_5\) which is obtained from the \(5\)-cycle by replacing
each vertex \(i\) by an independent set \(V_i\) of size \(n/5\) (for simplicity assume \(n\) is divisible by \(5\)) and each edge \(ij\) by a complete bipartite graph joining \(V_i\) and \(V_j\). The  blowup of the Petersen graph also achieves this bound tightly, see Figure~\ref{petersen}.

\begin{conjecture}[Erd\H{o}s]
\label{mainconjecture}
Any triangle-free graph \(G\) on \(n\) vertices contains a set of \(\lfloor n/2 \rfloor\) vertices that spans at most \(n^2/50\) edges.
\end{conjecture}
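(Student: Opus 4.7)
Since Erd\H{o}s's conjecture is open and the paper only resolves substantial partial cases, my proof plan mirrors the paper's apparent three-pronged strategy: a degree-threshold split together with a stability argument near the extremal examples. The plan is to classify graphs by $\delta(G)$ (or average degree) relative to $\tfrac{2}{5}n$, the degree attained in the $C_5$-blowup, and then separately handle graphs that look structurally like the $C_5$-blowup or the Petersen blowup.

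For the dense regime $\delta(G) \geq \tfrac{2}{5}n$, I would extend Krivelevich's approach: pick a vertex $v$ of minimum degree; its neighborhood $N(v)$ is independent, and $V(G) \setminus N(v)$ has size at most $\tfrac{3}{5}n$. To construct the sparse half, take a random subset of $V(G) \setminus N(v)$ of appropriate size together with a random subset of $N(v)$ filling to $\lfloor n/2 \rfloor$ vertices; the expected number of edges can be bounded by a careful degree-weighted calculation exploiting the fact that each edge lies in a non-triangle and so its endpoints have controlled codegree. To push the threshold below $\tfrac{2}{5}n$ by some $\gamma>0$, I would replace uniform sampling by including each vertex with a probability depending on its degree, sharpening the Keevash--Sudakov averaging scheme so that the contribution of high-degree vertices is damped.

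For graphs containing a vertex $v$ with $\deg(v) \leq (\tfrac{2}{5} - \gamma)n$, there is a natural reduction: $V(G) \setminus N[v]$ is still triangle-free and has size at least $(\tfrac{3}{5}+\gamma)n$, so one can apply induction to it and glue in a controlled portion of $N(v)$. For graphs close in edit distance to the $C_5$-blowup or the Petersen blowup, the strategy is to exploit the structure of the extremal graph directly: in the $C_5$-blowup the union $V_i \cup V_{i+1}$ of two consecutive parts has size $\tfrac{2}{5}n$ and spans exactly $\tfrac{1}{25}n^2$ edges, so a near-half is built by adding $\tfrac{n}{10}$ carefully chosen vertices; an analogous construction works for the Petersen blowup. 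A perturbation argument then transfers this to any graph at small edit distance, as the paper itself claims.

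The principal obstacle is the intermediate regime: graphs whose average degree is close to but below $\tfrac{2}{5}n$ but which are not structurally close to either extremal example. Here none of the three techniques applies cleanly: the averaging estimates lose by more than the slack, the low-degree reduction does not terminate usefully, and there is no extremal skeleton to edit. Overcoming this would likely require either a Szemer\'edi regularity partition together with a stability theorem classifying near-extremal triangle-free graphs under a strengthened ``sparse half'' invariant, or a flag-algebra computation tight enough to exclude all non-$C_5$, non-Petersen near-extremal configurations. Closing this gap is essentially what separates the paper's partial results from a full resolution of the conjecture.
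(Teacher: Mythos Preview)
This statement is an open conjecture; the paper does not prove it, and you correctly acknowledge that your outline is a strategy for the partial results rather than a full proof. That said, several pieces of your sketch do not match what the paper actually does, and a couple of your proposed steps would not work as stated.

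For the minimum-degree regime, the paper does \emph{not} use a random-sampling or degree-weighted averaging argument of the Krivelevich/Keevash--Sudakov type. Instead it invokes the Chen--Jin--Koh structure theorem: any triangle-free graph with $\delta(G)\ge \tfrac{5}{14}n$ admits a homomorphism onto some $F_d$ with $d\le 5$, and the heart of the proof is a case analysis showing that every weighted $F_d$ with minimum weighted degree $\ge 5/14$ has a sparse half (Theorem~\ref{weightedthm}), which then lifts back via Lemmas~\ref{auxlem1} and~\ref{auxlem2}. Your low-degree ``reduction'' via $V(G)\setminus N[v]$ also does not work: after induction you would need to add roughly $(\tfrac15-\tfrac{\gamma}{2})n$ vertices from $N(v)$ to reach $\lfloor n/2\rfloor$, and nothing controls the number of edges between those vertices and the inductive half. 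Finally, your sparse-half construction in the $C_5$-blowup is miscomputed: with the standard cycle labeling, two \emph{adjacent} parts $V_i\cup V_{i+1}$ already span $n^2/25$ edges, so adding $n/10$ more vertices cannot bring the total below $n^2/50$; the correct construction (and the one the paper uses) takes two \emph{non-adjacent} parts plus a sliver of a third. For the stability/edit-distance regime the paper's mechanism is also more specific than a generic perturbation argument: it introduces the notion of a $c$-\emph{uniform} sparse half and shows (Theorem~\ref{cuniformtouniform}) that such a half in a $c$-maximal triangle-free graph survives passage to any triangle-free $\eps$-disturbed subgraph, which is what makes the transfer to nearby graphs go through. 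Your diagnosis of the principal obstacle --- graphs of intermediate density far from both extremal blowups --- is accurate.
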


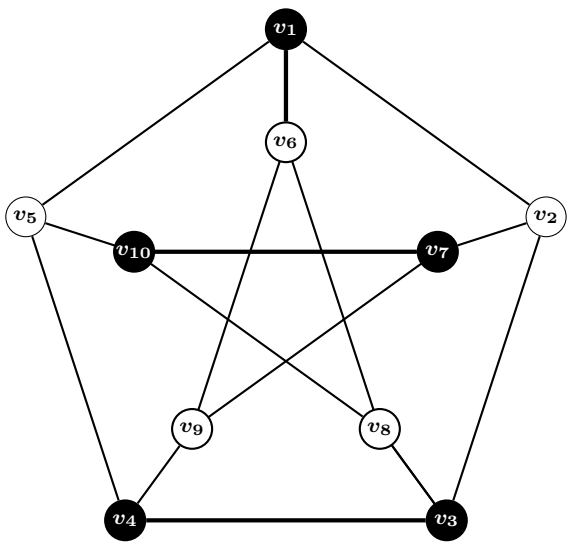
\begin{figure}[htbp]
\label{petersen}
\begin{center}
\begin{tikzpicture}[label distance=1mm]
    \tikzstyle{every node}=[draw,circle,fill=white,minimum size=15pt,
                            inner sep=0pt]
    \draw (0,0) node (v10)[draw=black,fill=black]{\textcolor{white}{\scriptsize$\boldsymbol{v_{10}}$}}
        [thick]-- ++(0:\vdist) node (v7)[draw=black,fill=black] {\textcolor{white}{\scriptsize$\boldsymbol{v_7}$}}
        -- ++(216:\vdist) node (v9){\textcolor{black}{\scriptsize$\boldsymbol{v_9}$}}
        -- ++(72:\vdist) node (v6) {\textcolor{black}{\scriptsize$\boldsymbol{v_6}$}}
        -- ++(288:\vdist) node (v8) {\textcolor{black}{\scriptsize$\boldsymbol{v_8}$}}
        -- ++(306:\udist) node (v3)[draw=black,fill=black]{\textcolor{white}{\scriptsize$\boldsymbol{v_3}$}};
	\path (v10) ++(162:\udist) node (v5){\textcolor{black}{\scriptsize$\boldsymbol{v_5}$}};
	\path (v7) ++(18:\udist) node (v2){\textcolor{black}{\scriptsize$\boldsymbol{v_2}$}};
	\path (v9) ++(234:\udist) node (v4)[draw=black,fill=black, thick]{\textcolor{white}{\scriptsize$\boldsymbol{v_4}$}};
	\path (v6) ++(90:\udist) node (v1)[draw=black,fill=black, thick]{\textcolor{white}{\scriptsize$\boldsymbol{v_1}$}};

      \draw [thick] (v10)--(v8);
\draw [thick] (v10)--(v5);
\draw  [thick](v7)--(v2);
\draw [thick] (v9)--(v4);
\draw  [thick](v6)--(v1);
\draw [ thick] (v8)--(v3);
\draw [ thick] (v5)--(v1);
\draw [ thick] (v2)--(v1);
\draw [ thick] (v2)--(v3);
\draw [ultra thick] (v4)--(v3);
\draw [ thick] (v4)--(v5);
\draw [ultra thick] (v1)--(v6);
\draw [ ultra thick] (v10)--(v7);

\end{tikzpicture}
\end{center}
\caption{A sparse half in the uniform blowup of Petersen graph.}
\end{figure}

In \cite{krivelevich} Krivelevich proved  that the conjecture holds if \(1/50\) is replaced by \(1/36\). He also showed that  it is true for triangle-free graphs  with minimum degree \(\frac{2}{5}n\). In Section~\ref{sec:minimum} we improve this result by proving the following theorem.

\begin{thm}
\label{maintheorem1}
Every  triangle-free graph on \(n\) vertices with minimum degree at least \(\frac{5}{14}n\) contains a set of \(\lfloor n/2 \rfloor \) vertices that spans at most \(n^2/50\) edges.
\end{thm}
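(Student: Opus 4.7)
The plan is to exploit the tight structural constraints on triangle-free graphs with high minimum degree. A chain of results extending the Andr\'asfai--Erd\H{o}s--S\'os theorem (due variously to H\"aggkvist, Jin, Chen--Jin--Koh, Brandt, and Thomass\'e) shows that for each $k$, if a triangle-free graph satisfies $\delta(G) > \tfrac{k+1}{3k+2}n$, then $G$ is homomorphic to the Andr\'asfai graph $A_k$, i.e., a subgraph of a blowup of $A_k$. For $k = 4$ this threshold is exactly $5n/14$. Under our hypothesis $\delta(G) \geq \tfrac{5}{14}n$, either strict inequality holds and $G$ embeds into a blowup of $A_4$ (on $11$ vertices, $4$-regular), or equality holds and we must additionally handle blowups of $A_5$ (on $14$ vertices, $5$-regular, the unique vertex-transitive triangle-free graph realizing the extremal ratio).

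Having reduced to an Andr\'asfai blowup, I would explicitly construct a sparse half. With blocks $V_1, \ldots, V_{3k-1}$ of sizes $n_i$, the goal is to choose $y_i \in [0, n_i]$ with $\sum_i y_i = \lfloor n/2 \rfloor$ minimizing $\sum_{ij \in E(A_k)} y_i y_j$. For the uniform blowup of $A_5$ with $n_i = n/14$, inspection of $A_5$ on $\mathbb{Z}_{14}$ (edges given by the differences $\pm 1, \pm 4, 7$) produces a $7$-element subset, for example $\{0, 2, 3, 5, 6, 8, 11\}$, inducing only $3$ edges; filling the corresponding blocks yields a half of size $n/2$ with $3(n/14)^2 = 3n^2/196 < n^2/50$ edges. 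Analogous inspection in $A_4$ on $\mathbb{Z}_{11}$ handles the blowup-of-$A_4$ case, and blowups of $A_1, A_2, A_3$ are easier since they admit proportionally larger independent sets and hence nearly edge-free halves.

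The main obstacle is non-uniform blowups and making the case analysis uniform. I would cast the remaining problem as a quadratic program: minimize $\sum_{ij \in E(A_k)} y_i y_j$ over $y_i \in [0, n_i]$ with $\sum_i y_i = \lfloor n/2 \rfloor$, subject to the degree constraints $\sum_{j \sim i} n_j \geq 5n/14$ for each $i$. The plan is to show the resulting minimum, viewed as a function of $(n_i)$, is concave and thus maximized at a symmetric feasible point, reducing every case to the uniform blowups already handled. A secondary technical difficulty is the boundary case $\delta(G) = 5n/14$ itself: since the structure theorem typically requires strict inequality, one needs a stability argument (or a direct analysis of $A_5$-blowups) ruling out non-blowup configurations at the exact threshold.
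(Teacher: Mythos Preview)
Your high-level strategy matches the paper's: both use the Chen--Jin--Koh structure theorem to reduce to (weighted) blowups of Andr\'asfai graphs $F_k$ for $k\le 5$, and then handle each $k$ separately. The reduction is actually cleaner than you fear: since $5/14>10/29$, Jin's theorem gives $\chi(G)\le 3$, and then Chen--Jin--Koh with $d=5$ (threshold $6/17<5/14$) gives $F_5$-type directly for all $\delta(G)\ge 5n/14$. No stability argument at the boundary is needed; the ``non-blowup configurations at the exact threshold'' simply do not arise. Your explicit half for the uniform $A_5$-blowup (seven vertices spanning three edges, giving $3n^2/196<n^2/50$) also coincides with the paper's.

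The genuine gap is your treatment of non-uniform blowups for $k=3,4$. The claim that the optimal value $f(n_1,\ldots,n_{3k-1})$ of the quadratic program is concave in the part sizes is unsubstantiated and almost certainly false: the objective $\sum_{ij\in E(A_k)} y_iy_j$ is an \emph{indefinite} quadratic form (the adjacency matrix of $A_k$ has negative eigenvalues), so standard parametric-QP arguments give no concavity of the value function. And even granting concavity plus symmetry, the feasible set of part-size vectors is cut out by the degree constraints $\sum_{j\sim i} n_j\ge 5n/14$, so ``concave hence maximized at the symmetric point'' would still require an argument you have not supplied. This is precisely the hard case, and your plan does not address it.

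The paper sidesteps this entirely. Passing to the weighted setting, for each $F_d$ it writes down an explicit one-parameter family of fractional halves $s_1,\ldots,s_{3d-1}$, each supported on $d+2$ consecutive vertices, and shows by averaging that $\sum_i s_i(E)\le (3d-1)/50$, hence some $s_i$ is sparse. For $d=2$ this falls out of Jensen and needs no minimum-degree assumption; for $d=3,4$ it reduces to two explicit polynomial inequalities in the weights (Lemmas~3.3 and~3.4, proved in the appendix by direct estimation). For $d=5$ the degree constraints force every weight to equal $1/14$, so only the uniform case you already handled remains. Crucially, the minimum-degree hypothesis is used not via any extremality or concavity principle, but only to produce the pointwise bounds $\omega(v_i)\ge 1/14$, which are needed to certify that the $s_i$ are valid halves and to control the averaging sums.
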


Our proof of Theorem~\ref{maintheorem1} is mainly based on the structural characterization of the graphs with minimum degree at least \(\frac{5}{14}n\), established by Jin, Chen  and Koh in \cite{jinchenkoh,jin}. We also use some  averaging arguments similar to the ones used  in \cite{Sudakov, krivelevich}.

\vskip 5pt
Keevash and Sudakov ~\cite{Sudakov} improved Krivelevich's result of~\cite{krivelevich} showing that the conjecture holds for graphs with average degree \(\frac{2}{5}n\). In Section~\ref{sec:average} we extend their result as follows.

\begin{thm}
\label{maintheorem2}
There exists \(\gamma >0\) such that every triangle-free graph on \(n\) vertices with at least \((\frac{1}{5}-\gamma )n^2\) edges  contains a set of  \(\lfloor n/2 \rfloor \) vertices that spans at most \(n^2/50\) edges.
\end{thm}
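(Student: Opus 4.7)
The plan is to argue by induction on $n$, using Theorem~\ref{maintheorem1} as the engine whenever the minimum degree is sufficiently large, and a vertex-removal reduction otherwise. Fix $\gamma>0$ small enough that $\tfrac{5}{14}+2\gamma<\tfrac{2}{5}$; any $\gamma\leq\tfrac{1}{100}$ will do.

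Let $G$ be a triangle-free graph on $n$ vertices with $e(G)\geq(\tfrac{1}{5}-\gamma)n^{2}$. If $\delta(G)\geq \tfrac{5n}{14}$, then Theorem~\ref{maintheorem1} produces the required sparse half immediately. Otherwise, let $v$ be a vertex of minimum degree, so $d(v)<\tfrac{5n}{14}$. The choice of $\gamma$ guarantees
\[
e(G-v)=e(G)-d(v)\;\geq\;\left(\tfrac{1}{5}-\gamma\right)n^{2}-\tfrac{5n}{14}\;\geq\;\left(\tfrac{1}{5}-\gamma\right)(n-1)^{2}
\]
for all sufficiently large $n$, so induction applied to $G-v$ yields a sparse half $S'$ with $|S'|=\lfloor(n-1)/2\rfloor$ and $e(S')\leq(n-1)^{2}/50$. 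When $n$ is odd, $|S'|=\lfloor n/2\rfloor$ and $S=S'$ works, so we are finished. (Small $n$ are handled as base cases by directly verifying the Erd\H{o}s conjecture.)

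The even case is the main obstacle: $|S'|=n/2-1$, and we must extend $S'$ by one vertex $u\in V(G)\setminus S'$ gaining at most $\tfrac{n^{2}-(n-1)^{2}}{50}=\tfrac{2n-1}{50}$ edges. A naive averaging argument will not suffice, since the degree budget $e(G)-e(S')-e(V\setminus S')$ from $V\setminus S'$ into $S'$ is of order $n^{2}$, so the average $|N_{G}(u)\cap S'|$ for $u\in V\setminus S'$ is $\Theta(n)$, far above the threshold $\tfrac{2n-1}{50}$. My plan is therefore to carry a strengthened inductive hypothesis with a dichotomy: either the sparse half comes equipped with a compatible extension vertex (so the induction closes cleanly), or $G$ admits a structural certificate that it is within $O(\gamma n^{2})$ edge edits of a blowup of $C_{5}$ or of the Petersen graph, in which case one constructs the desired sparse half by hand from the blowup structure (two non-adjacent classes plus a half class, mirroring the calculation $|V_{1}|+|V_{3}|+\tfrac{1}{2}|V_{4}|$ giving exactly $n^{2}/50$ edges in the $C_{5}$-blowup).

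The hard part is establishing and propagating this dichotomy, and it is here that the slack $\gamma>0$ is used essentially: a stability theorem for triangle-free graphs near the density $\tfrac{1}{5}n^{2}$ should force the structural alternative whenever no compatible extension vertex can be found, provided we are willing to lose $\Omega(\gamma n^{2})$ edges in the stability estimate. Making this quantitative, and ensuring that the explicit constructions in the near-extremal branch remain robust under $O(\gamma n^{2})$ edge perturbations (and not only for exact blowups), is what I expect to be the technical core of the proof.
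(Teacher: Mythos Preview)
Your inductive framework does not actually carry the argument. The odd step closes trivially, but, as you yourself flag, the even case is the entire difficulty, and for it you offer only a plan: strengthen the induction hypothesis to a dichotomy (``either the sparse half comes with an extension vertex, or $G$ is near a $C_5$- or Petersen-blowup''), then handle the near-blowup branch ``by hand''. There are two problems. First, you never state the strengthened hypothesis precisely, and propagating the ``extension vertex'' clause through a vertex deletion is not obviously feasible: the deleted vertex $v$ is your only new candidate, and nothing controls $|N(v)\cap S'|$. Second, and more seriously, proving that every triangle-free graph within $O(\gamma n^2)$ edits of a $C_5$-blowup has a sparse half is itself the substantive core of the paper's argument (Theorem~\ref{t:CloseMain} together with Theorem~\ref{uniformtheorem}, resting on all the machinery of Section~\ref{s:uniform}). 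Once that is in hand, the paper finishes with no induction at all: it combines a direct structural trichotomy (Theorem~\ref{maintheorem}: near-$C_5$, or many high-degree vertices, or near-bipartite) with Keevash--Sudakov (Theorem~\ref{theoremsudakov}) for the second case and the obvious large-part argument for the third. Your vertex-removal step therefore adds scaffolding without reducing the real work.

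Two side remarks. The Petersen branch is a red herring for this theorem: a uniform Petersen blowup has only $\tfrac{3}{20}n^2$ edges, well below $(\tfrac15-\gamma)n^2$ for small $\gamma$, so no graph satisfying the hypothesis is close to one. And your base cases cannot appeal to ``directly verifying the Erd\H{o}s conjecture''---it is open; at best you mean a finite check restricted to dense triangle-free graphs on few vertices, and you should say so.
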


Finally, we study the validity of the conjecture in the neighborhood of the known extremal examples, in the following sense. In the uniform blowup of the Petersen graph every set of \(\lfloor n/2 \rfloor \) has at least \(n^2/50\) edges (see Figure~\ref{petersen}). To the best of our knowledge, the uniform blowups of the Petersen graph and $C_5$, as defined in the beginning of the introduction, are the only known examples for which Conjecture~\ref{mainconjecture} is tight. In Section~\ref{s:uniform} we develop a set of tools which allow us to prove the Conjecture~\ref{mainconjecture} for the classes of graphs which are close to a fixed graph in edit distance. In Section~\ref{sec:petersen} we use these tools to verify the conjecture for graphs which are close to the  Petersen graph, while Theorem~\ref{maintheorem2} shows that it also holds for graphs close to the $5$-cycle. These results can be considered as a proof of
a local version of Conjecture~\ref{mainconjecture}, in the spirit of recent results of Lov\'asz~\cite{LovSidorenko} and Razborov~\cite{RazborovCH,RazborovTuran34}. Lov\'{a}sz~\cite{LovSidorenko} proved Sidorenko's conjecture locally in the neighborhood  of the conjectured extremal example with respect to the cut-norm (and consequently, with respect to the edit distance). Razborov proved that the Caccetta-H\"{a}ggkvist conjecture~\cite{RazborovCH} and the Tur\'{a}n's \((3,4)\)-problem~\cite{RazborovTuran34} hold locally in the neighborhood of families of the known examples in a slightly different sense, as he additionally forbade certain induced subconfigurations.

\section{Notation and Preliminary Results}

For a graph \(G\), we let \(\pl{v}(G):=|V(G)|\)  and \(\pl{e}(G):=|E(G)|\). Denote by \(N_{G}(v)\) the neighborhood of a vertex \(v \in V(G)\) and by \(d_G(v)\) the degree of \(v\) (i.e. \(d_G(v):=|N_G(v)|\)). Whenever there is no ambiguity, we will use \(N(v)\) and \(d(v)\) for shorter notation. The maximum and the minimum degrees are denoted by \(\Delta(G)\) and \(\delta(G)\), respectively.

A  graph \(G\) is called \emph{triangle-free} if it does not contain a triangle, it is \emph{maximal} triangle-free if adding any edge creates a triangle. We say that $G$ \emph{contains a sparse half} if there exists a set of \(\lfloor n/2 \rfloor\) vertices in $G$ that spans at most \(n^2/50\)  edges. Conjecture~\ref{mainconjecture} says that there must exist a sparse half in every triangle-free graph.

We say that $\omega: V(G) \to (0,1)$ is a \emph{weight function on} $G$ if $\sum_{v\in V(G)} {\omega(v)} =1.$
A pair \((G, \omega)\), where $\omega$ is a weight function on $G$ is called a \emph{weighted graph}.
The weight $\omega(e)$ of an edge \(e= (u,v)\) in $(G,\omega)$ is defined as \(\omega (u)\cdot \omega(v)\). For a set $X$ of vertices or edges in $G$ let $\omega(X):=\sum_{x \in X}\omega(x)$.
The \emph{degree} of a vertex \(v\) in a weighted graph \((G,\omega)\) is defined as \(\omega(N(v))\). The \emph{minimum degree} of the weighted graph \((G,\omega)\)  is denoted by \({\delta}(G,\omega)\).

We call a real function \(\pl{s}:V(G)\rightarrow \mathbb{R^+}\) \emph{a half} of \(G\) if   $\pl{s}(v) \leq \omega(v)$ for every \(v \in V(G)\), and $\pl{s}(V(G)) := \sum_{v \in V(G)}{\pl{s}(v)} =1/2.$ For every edge \(e=(u,v)\), we define \(\pl{s}(e):=\pl{s}(u)\pl{s}(v)\). Let \(\pl{s}\) be a half of \(G\), if $\pl{s}(E(G)):= \sum_{e\in E(G)}{\pl{s}(e)}\leq \frac{1}{50}$ then \(\pl{s}\) is called a \emph{sparse half}.

There is a natural way of associating a weighted graph  to a graph \(G\) of order \(n\); that is, we can assign to each vertex \(v\in V(G)\)  weight equal to \(\frac{1}{n}\).  For each graph \(G\), the corresponding weighted graph defined as above, is called \emph{uniformly weighted} \(G\); we denote it by \((G,{\omega}_u)\).

\begin{lem}
\label{auxlem1} If \((G,\omega_u)\) has a sparse half, then so does \(G\).
\end{lem}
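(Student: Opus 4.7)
The plan is to convert a fractional sparse half of $(G,\omega_u)$ into an integral sparse half via a rounding argument that never increases the edge-weight. Given a sparse half $\pl{s}$ of $(G,\omega_u)$, define $x \colon V(G)\to[0,1]$ by $x(v) = n \cdot \pl{s}(v)$. Then $\pl{s}(v)\le \omega_u(v)=1/n$ gives $x(v)\in[0,1]$, $\pl{s}(V(G))=1/2$ gives $\sum_v x(v)=n/2$, and the edge-bound $\pl{s}(E(G))\le 1/50$ translates to
$$\Phi(x) := \sum_{uv\in E(G)} x(u)x(v) \le n^2/50.$$
The goal is to produce a $\{0,1\}$-vector with the same sum of at most $\lfloor n/2\rfloor$ and $\Phi$-value at most $n^2/50$; its support is then a sparse half of $G$ in the combinatorial sense.

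The key step is a coordinate-by-coordinate rounding that preserves $\sum_v x(v)$ and does not increase $\Phi$. Pick any two distinct fractional coordinates $v_1,v_2$ with $x(v_1),x(v_2)\in(0,1)$ and consider the family $x_t := x + t e_{v_1} - t e_{v_2}$ for $t$ in the interval $I$ where $x_t$ remains in $[0,1]^{V(G)}$. A direct calculation gives
$$\Phi(x_t) = \Phi(x) + t\Bigl(\sum_{u \sim v_1} x(u) - \sum_{u \sim v_2} x(u)\Bigr) - t^2 \mathbf{1}[v_1v_2\in E(G)],$$
which is linear in $t$ if $v_1v_2\notin E(G)$ and a concave quadratic in $t$ if $v_1v_2\in E(G)$. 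In either case the minimum of $\Phi(x_t)$ over $I$ is attained at an endpoint of $I$, at which point at least one of $x_t(v_1), x_t(v_2)$ has been rounded to $\{0,1\}$, while $\Phi(x_t)\le \Phi(x)$. Iterating, we strictly reduce the number of fractional coordinates at each step, so the process terminates with at most one fractional coordinate.

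It remains to handle the termination cases. If $n$ is even, the integer $\sum_v x(v)=n/2$ forces all coordinates to be integral at termination, yielding a set $S$ of size $n/2=\lfloor n/2\rfloor$ with $e(G[S])=\Phi(x)\le n^2/50$. If $n$ is odd, exactly one coordinate $v_0$ remains fractional, and the sum constraint forces $x(v_0)=1/2$ with $(n-1)/2$ coordinates equal to $1$; setting $x(v_0):=0$ changes $\Phi$ by $-\tfrac{1}{2}\sum_{u\sim v_0} x(u)\le 0$ and produces a set $S$ of size $(n-1)/2=\lfloor n/2\rfloor$ spanning at most $n^2/50$ edges.

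The only subtlety is verifying the sign of the quadratic term and the endpoint analysis, but since $\Phi$ restricted to the line through $x$ in direction $e_{v_1}-e_{v_2}$ is at worst a downward parabola, its minimum on any compact sub-interval is attained at an endpoint; this makes the rounding step automatic, so there is no real obstacle beyond bookkeeping for the parity of $n$.
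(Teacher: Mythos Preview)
Your proof is correct and follows essentially the same approach as the paper: both arguments pick two fractional coordinates and shift mass along the line $e_{v_1}-e_{v_2}$, using that the edge-weight functional is linear or concave along this line so its minimum on the feasible segment occurs at an endpoint, thereby rounding one coordinate at a time. Your rescaling to $[0,1]^{V(G)}$ and explicit treatment of the parity of $n$ make the bookkeeping slightly cleaner than the paper's version, but the underlying idea is identical.
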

\begin{proof}
We claim that, if \((G,\omega_u)\) has a sparse half, then it has a sparse half  \(\pl{s}\) such that \(\pl{s}(v)=0\) or \(\pl{s}(v)=\frac{1}{n}\) for all  \(v \in V(G)\) except for possibly one vertex.

Indeed, let us choose a sparse half \(\pl{s}\) of \(G\) such that the number of vertices \(v\in V(G)\) such that either \(\pl{s}(v)=0\) or \(\pl{s}(v)=\frac{1}{n}\) is maximum. We show that there exists  at most one vertex \(u\) such that  \(0<\pl{s}(u)<\frac{1}{n}\).

Suppose not and there exist \(u,v\in V(G)\) such that \(0<\pl{s}(u),\pl{s}(v) <\frac{1}{n}\). We define a new half \(\pl{s}':V(G)\rightarrow \mathbb{R}^+\).
Let \(\pl{s}'\) be the same as \(\pl{s}\) on all vertices of \(G\), except \(u\) and \(v\). Without loss of generality, suppose $\pl{s}(N(u)) \leq \pl{s}(N(v))$.
Let \(\delta = \min\{\pl{s}(u), \frac{1}{n}-\pl{s}(v)\}\) and define \(\pl{s}'(u) = \pl{s}(u)-\delta\) and \(\pl{s}'(v) = \pl{s}(v) + \delta\). We will show that $\pl{s}'$ is a sparse half.

Suppose that \(u\) and \(v\) are adjacent, then
\begin{align*}
\pl{s}'(E(G))  &= \pl{s}(E(G)) - \sum_{x\in N(u) \atop x\neq v}{\delta \pl{s}(x)} + \sum_{y\in N(v) \atop y\neq u }{\delta \pl{s}(y)} + \pl{s}'(u)\pl{s}'(v) - \pl{s}(u)\pl{s}(v) \\
&=\pl{s}(E(G)) - \sum_{x\in N(u) \atop x\neq v}{\delta \pl{s}(x)} + \sum_{y\in N(v) \atop y\neq u }{\delta \pl{s}(y)} -\delta \pl{s}(v) + \delta \pl{s}(u) -\delta^2  \\
&= \pl{s}(E(G)) - \delta \left(\pl{s}(N(u))-\pl{s}(N(v))\right) -\delta^2 < \pl{s}(E(G)).
\end{align*}
The calculation in the case when \(u\) and \(v\) are non-adjacent is similar.

It follows that $\pl{s}'$ contradicts the choice of \(\pl{s}\). Hence
for all vertices \(v\in V(G)\) except maybe one vertex either \(\pl{s}(v)=0\) or \(\pl{s}(v)=\frac{1}{n}\). Let \[S = \{v \in V(G)\:|\: \pl{s}(v)=1/n\}.\] It follows from the previous observation that $|S| \geq \lfloor n/2 \rfloor$.   It is easy to see that \(\pl{e}(G[S]) \leq n^2\pl{s}(E(G)) \leq n^2/50\), where \(G[S]\) is defined to be the subgraph of \(G\) induced by the vertex set \(S\). This shows that $S$ ia a sparse half in $G$, as desired.
\end{proof}

Lemma~\ref{auxlem1} allows us to work with weighted graphs, which proves to be convenient.  We prove that every weighted triangle-free graph  with minimum degree at least \(5/14\) contains a sparse half. Our proof uses structural characterization of these graphs found by Jin, Chen  and Koh in \cite{jinchenkoh,jin}.  To state their result we need a few additional definitions.

A mapping \(\varphi :V(G)\rightarrow V(H)\) for graphs $G$ and $H$ is  a \emph{homomorphism from $G$ to $H$}, if it is edge-preserving, that is, for any pair of adjacent vertices \(u, v \in V(G)\),  $\varphi(u)$ and $\varphi(v)$ are adjacent in $H$. We say that $G$ is \emph{of $H$-type} if there exists a homomorphism from $G$ to $H$.   Let \(\varphi\) be a surjective homomorphism and \(\omega\) be a weight function on \(G\). We define a weight function \(\omega_{\varphi}\) on \(H\) in the following way. For every vertex \(v\in V(H)\), let $\omega_{\varphi}(v) := \omega({\varphi}^{-1}(v))$. The next lemma shows that a sparse half in a  homomorphic image of a graph \(G\) can be lifted to a sparse half in the graph \(G\).

\begin{lem}
\label{auxlem2} Let $G,H$ be graphs and \(\varphi: V(G) \to V(H)\) be a surjective homomorphism. Then for any weight function \(\omega\), if \((H,\omega_{\varphi})\) has a sparse half, then so does  \((G,\omega)\).
\end{lem}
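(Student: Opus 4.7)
\emph{Plan.} The natural idea is to lift a sparse half of $(H,\omega_{\varphi})$ to $(G,\omega)$ by distributing the mass on each vertex $v\in V(H)$ across its fibre $\varphi^{-1}(v)$ in proportion to $\omega$. So, given a sparse half $\pl{s}_H$ of $(H,\omega_{\varphi})$, I would define
\[
\pl{s}_G(u) \;:=\; \omega(u)\cdot \frac{\pl{s}_H(\varphi(u))}{\omega_{\varphi}(\varphi(u))} \qquad (u\in V(G)).
\]
This is well-defined because the fibre $\varphi^{-1}(\varphi(u))$ contains $u$ and $\omega$ is positive, so $\omega_{\varphi}(\varphi(u))>0$. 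Note also that since $\varphi$ is surjective, $\omega_{\varphi}$ is itself a weight function on $H$, so the hypothesis on $\pl{s}_H$ makes sense.

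\emph{Checking that $\pl{s}_G$ is a half.} Pointwise, $\pl{s}_H(v)\le \omega_{\varphi}(v)$ immediately gives $\pl{s}_G(u)\le \omega(u)$. Summing over the fibres,
\[
\sum_{u\in V(G)} \pl{s}_G(u) \;=\; \sum_{v\in V(H)} \frac{\pl{s}_H(v)}{\omega_{\varphi}(v)} \sum_{u\in \varphi^{-1}(v)} \omega(u) \;=\; \sum_{v\in V(H)} \pl{s}_H(v) \;=\; \tfrac12.
\]

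\emph{Checking sparsity.} This is the only step that uses the homomorphism property. Any edge $u_1u_2\in E(G)$ satisfies $\varphi(u_1)\varphi(u_2)\in E(H)$, and in particular $\varphi(u_1)\ne\varphi(u_2)$ since $H$ is simple. For each $v_1v_2\in E(H)$, the edges of $G$ mapped onto it form a subset of $\varphi^{-1}(v_1)\times \varphi^{-1}(v_2)$, hence
\[
\sum_{\substack{u_1u_2\in E(G)\\ \varphi(u_i)=v_i}} \omega(u_1)\omega(u_2) \;\le\; \omega_{\varphi}(v_1)\,\omega_{\varphi}(v_2).
\]
Grouping the edges of $G$ by their images in $H$ then yields
\[
\pl{s}_G(E(G)) \;=\; \sum_{v_1v_2\in E(H)} \frac{\pl{s}_H(v_1)\pl{s}_H(v_2)}{\omega_{\varphi}(v_1)\omega_{\varphi}(v_2)} \sum_{\substack{u_1u_2\in E(G)\\ \varphi(u_i)=v_i}} \!\!\omega(u_1)\omega(u_2) \;\le\; \pl{s}_H(E(H)) \;\le\; \tfrac{1}{50}.
\]

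\emph{Main obstacle.} There is no serious obstruction: the argument is a direct calculation once the right lifting formula is identified. The only point requiring care is recognising that splitting $\pl{s}_H(v)$ across the fibre $\varphi^{-1}(v)$ proportionally to $\omega$ (rather than, say, uniformly) is exactly what lets one simultaneously preserve the constraint $\pl{s}_G\le \omega$ and exploit the edge inequality displayed above.
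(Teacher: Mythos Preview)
Your proposal is correct and uses exactly the same lifting formula as the paper, $\pl{s}_G(u)=\omega(u)\,\pl{s}_H(\varphi(u))/\omega_{\varphi}(\varphi(u))$; the paper simply states this definition and leaves the verification to the reader, while you have written out the details. There is nothing to add.
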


\begin{proof} Let $\pl{s}_H$ be a sparse half on \((H,{\omega}_{\varphi})\). Define
$$\pl{s}_G(v) := \frac{\omega(v)}{\omega_{\varphi}(\varphi(v))}\pl{s}_H(\varphi(v))$$
for every $v \in V(G)$. It is easy to check that \(\pl{s}_G\) is a sparse half of \(G\).
\end{proof}

Now let us introduce a family of graphs that plays a key role in our results. For an integer $d \geq 1$, let   \emph{\(F_d\)} be  the graph with \[V(F_d) = \{v_1, v_2, . . . , v_{3d -1}\},\] such that the vertex \(v_j\) has neighbors \(v_{j + d}, \dots , v_{j + 2d - 1}\) (all indices larger than \(3d-1\) are taken modulo \(3d - 1\)). It is easy to see that \(F_1=K_2\), \(F_2=C_5\) and \(F_3\) is a M\"{o}bius ladder (see Figure~\ref{mobius}). Clearly, if a graph is of \(F_d\)-type, it is of \(F_k\)-type, for all \(k>d\). These graphs were first introduced by Woodall~\cite{woodall}. In 1973 he conjectured that if the binding number of a graph $G$ is at least $3/2$ then
$G$ contains a triangle. Graphs $F_d$ are in the family of
graphs constructed by Woodall to explain that $3/2$ in his conjecture is the
least possible. Let us state some general properties of these graphs that we will use implicitly or explicitly throughout the paper. These can be easily checked. 

\begin{fact}\label{fdgraphs}For every \(d\geq 1\), $F_d$ is triangle-free, \(3\)-colorable and the only maximum independent sets are vertex neighborhoods, in particular, \(\alpha(F_d)=d\).
\end{fact}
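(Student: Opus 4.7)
My plan rests on the reformulation: for distinct \(i,j\), the vertices \(v_i\) and \(v_j\) are adjacent in \(F_d\) iff \((j-i)\bmod(3d-1)\in\{d,d+1,\ldots,2d-1\}\). This set is closed under \(k\mapsto(3d-1)-k\), which both confirms that the adjacency relation is symmetric and shows that non-adjacency of \(v_i,v_j\) is equivalent to their circular distance on \(\mathbb{Z}/(3d-1)\mathbb{Z}\) being at most \(d-1\).

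With this reformulation, triangle-freeness and 3-colorability are immediate. If \(v_av_bv_c\) were a triangle, then writing the three clockwise steps \(x=(b-a)\), \(y=(c-b)\), \(z=(a-c)\) modulo \(3d-1\), each lies in \(\{d,\ldots,2d-1\}\) and \(x+y+z\equiv 0\pmod{3d-1}\); but \(x+y+z\in[3d,\,6d-3]\) contains no multiple of \(3d-1\), since \(3d-1<3d\) and \(2(3d-1)=6d-2>6d-3\). For 3-colorability, the partition \(\{v_1,\ldots,v_d\}\cup\{v_{d+1},\ldots,v_{2d}\}\cup\{v_{2d+1},\ldots,v_{3d-1}\}\) gives a proper coloring, because any two indices in one class differ by at most \(d-1\), hence by an amount outside \(\{d,\ldots,2d-1\}\).

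For the classification of maximum independent sets, I would fix an independent set \(S\) and any \(v_a\in S\); every other element of \(S\) must lie within circular distance \(d-1\) of \(v_a\), so \(S\subseteq A:=\{v_{a-(d-1)},\ldots,v_{a+(d-1)}\}\), an arc of \(2d-1\) consecutive vertices. After relabelling so \(a=0\), a direct check (using that differences within \(A\) have absolute value at most \(2d-2<3d-1\)) shows that \(v_i,v_j\in A\) are non-adjacent iff \(|i-j|\le d-1\). Setting \(M=\max\{i:v_i\in S\}\ge 0\) and \(m=\min\{i:v_i\in S\}\le 0\), the extreme pair forces \(M-m\le d-1\), so \(|S|\le M-m+1\le d\), with equality forcing \(S=\{v_m,\ldots,v_M\}\) to be a block of exactly \(d\) consecutive vertices. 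Finally, as \(k\) ranges over \(\mathbb{Z}/(3d-1)\mathbb{Z}\) the neighborhood \(N(v_k)=\{v_{k+d},\ldots,v_{k+2d-1}\}\) traces out all \(3d-1\) consecutive \(d\)-blocks on the cycle, so every maximum independent set is indeed a vertex neighborhood.

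The main obstacle is the bookkeeping in the last paragraph: reconciling ``arc distance within \(A\)'' with ``circular distance modulo \(3d-1\),'' and matching \(d\)-blocks bijectively with neighborhoods. Once the circular-distance reformulation is in place, every remaining step reduces to a short modular computation.
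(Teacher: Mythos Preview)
Your argument is correct. The paper does not actually prove this fact; it is stated with the remark ``These can be easily checked,'' so there is no proof in the paper to compare against. Your circular-distance reformulation cleanly handles all three claims, and the bookkeeping in the final paragraph (reducing adjacency within the arc \(A\) to ordinary distance, then pinning down \(S\) as a block via its extremes) is sound. In short, you have supplied exactly the verification the paper omits.
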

In 1974 Andrasfai, Erd\H{o}s and Sos~\cite{erdossos} showed that every triangle-free graph $G$ of
order $n$ with $\delta(G) > 2n/5$ is bipartite (in other words, \(F_1\)-type). In a similar spirit, H\"{a}ggkvist~\cite{haggkvist} proved that every triangle-free graph $G$ of order $n$ with $\delta(G) > 3n/8$
is of $F_2$-type (i.e. $C_5$-type). In 1995  Jin~\cite{jin} improved H\"{a}ggkvist's result as following.

 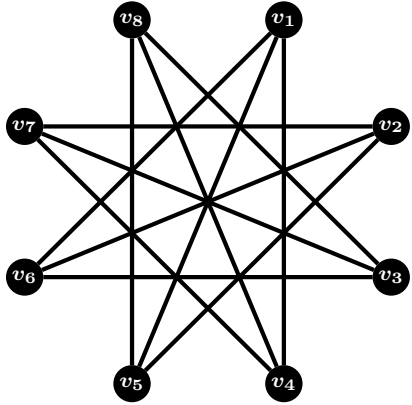
\begin{figure}
\label{mobius}
\begin{center}

\begin{tikzpicture}[label distance=3mm, line width=2]

every node/.style=draw,
    every label/.style=draw

    \tikzstyle{every node}=[draw,circle,fill=black,minimum size=12pt,
                            inner sep=0pt]

    \draw (0,0) node (v1) [draw=black] {\textcolor{white}{\scriptsize$\boldsymbol{v_1}$}}
           ++(315:2cm) node (v2) [draw=black] {\textcolor{white}{\scriptsize$\boldsymbol{v_2}$}}
           ++(270:2cm) node (v3) [draw=black] {\textcolor{white}{\scriptsize$\boldsymbol{v_3}$}}
           ++(225:2cm) node (v4) [draw=black] {\textcolor{white}{\scriptsize$\boldsymbol{v_4}$}}
           ++(180:2cm) node (v5) [draw=black] {\textcolor{white}{\scriptsize$\boldsymbol{v_5}$}}
           ++(135:2cm) node (v6) [draw=black] {\textcolor{white}{\scriptsize$\boldsymbol{v_6}$}}
           ++(90:2cm) node (v7) [draw=black]{\textcolor{white}{\scriptsize$\boldsymbol{v_7}$}}
           ++(45:2cm) node (v8) [draw=black]{\textcolor{white}{\scriptsize$\boldsymbol{v_8}$}};

     \draw [ultra thick] (v1)--(v4);
     \draw [ultra thick] (v1)--(v5);
     \draw [ultra thick] (v1)--(v6);
     \draw [ultra thick] (v2)--(v5);
     \draw [ultra thick] (v2)--(v6);
     \draw [ultra thick] (v2)--(v7);
     \draw [ultra thick] (v3)--(v6);
     \draw [ultra thick] (v3)--(v7);
     \draw [ultra thick] (v3)--(v8);
     \draw [ultra thick] (v4)--(v7);
     \draw [ultra thick] (v4)--(v8);
     \draw [ultra thick] (v5)--(v8);

\end{tikzpicture}
\end{center}
\caption{The M\"{o}bius ladder \(F_3\)}
\end{figure}
\begin{thm}[Jin,~\cite{jin}]\label{jin}Every triangle-free graph \(G\) of order \(n\) with minimum degree \(\delta(G) > 10n/29\) is of \(F_9\)-type. 
\end{thm}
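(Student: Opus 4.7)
The plan is to prove the contrapositive: assume $G$ is a triangle-free graph that admits no homomorphism to $F_9$, and deduce $\delta(G) \leq 10n/29$. I would follow the classical Andr\'asfai--Erd\H os--S\'os paradigm (previously sharpened by H\"aggkvist to yield $F_2$-type under $\delta(G) > 3n/8$), pushed one more step up the Andr\'asfai hierarchy.

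First, a standard reduction: taking a maximal triangle-free supergraph $G^*$ of $G$ on the same vertex set only increases the minimum degree and preserves ``not of $F_9$-type'' (any homomorphism of $G^*$ to $F_9$ restricts to a homomorphism of $G$), so I may assume $G$ itself is maximal triangle-free. In particular, every non-edge $uv$ has a common neighbour, i.e.\ $N(u) \cap N(v) \neq \emptyset$.

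The heart of the argument is an analysis of how neighbourhoods intersect under the hypothesis $\delta(G) > 10n/29$. Triangle-freeness gives $N(u)\cap N(v)=\emptyset$ whenever $uv \in E$, so $d(u)+d(v) \leq n$ along every edge; for non-adjacent $u,v$ the elementary bound $|N(u)\cap N(v)| \geq 2\delta(G)-n$ holds. Propagating these inequalities along short walks, I would aim to prove a neighbourhood dichotomy: for any two vertices $u,v$, either $N(u) = N(v)$ or $|N(u) \triangle N(v)|$ exceeds an explicit fraction of $n$. Calibrating the threshold so that $\delta(G) > 10n/29$ forces at most $26$ distinct neighbourhood classes, I would then identify these classes with the $26$ vertices of $F_9$; the induced adjacency on classes must obey the cyclic rule $j \mapsto \{j+9, \ldots, j+17\} \pmod{26}$, producing the required homomorphism.

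The main obstacle is sharpness, specifically ruling out an obstruction of Andr\'asfai type $F_{10}$. The graph $F_{10}$ has $29$ vertices of degree $10$, so its minimum-degree ratio is exactly $10/29$, and $F_{10}$ is triangle-free but not of $F_9$-type (the fractional chromatic number $\chi_f(F_d) = (3d-1)/d$ is strictly increasing in $d$, and fractional chromatic number is monotone under homomorphism). Any proof must therefore exploit the strict inequality $\delta(G) > 10n/29$ in a way that prevents a tenth shadow class from forming in the analysis above. This is the delicate case analysis carried out in~\cite{jin}; once it is in place, the rest of the argument is the bookkeeping that extends the Andr\'asfai hierarchy one step further.
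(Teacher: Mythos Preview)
The paper does not prove this statement; it is quoted from Jin~\cite{jin} as a known result and then used (together with Theorem~\ref{chenjinkohoriginal}) as a black box to obtain Corollary~\ref{chenjinkoh}. There is therefore no proof in the paper to compare your proposal against.

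Your sketch is a plausible description of the \emph{shape} of Jin's argument, but it is not itself a proof: the crux is precisely the ``delicate case analysis'' that you explicitly defer back to~\cite{jin}, and the preceding steps (a neighbourhood dichotomy forcing at most $26$ distinct neighbourhood classes, followed by an identification of those classes with $V(F_9)$ respecting the cyclic adjacency rule) are stated as goals rather than carried out. One technical point worth flagging: the inclusion--exclusion bound $|N(u)\cap N(v)|\geq 2\delta(G)-n$ is vacuous in this regime since $20/29<1$, so whatever intersection estimates drive the dichotomy must come from a more structured argument than the elementary one you indicate.
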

Note that the results of Andrasfai, Erd\H{o}s and Sos~\cite{erdossos},
H\"{a}ggkvist~\cite{haggkvist} and Jin~\cite{jin} are sharp. In~\cite{jinchenkoh} the authors extended all the results mentioned above and proved the following stronger statement.
\begin{thm}[Chen, Jin, Koh, \cite{jinchenkoh}]\label{chenjinkohoriginal} Let \(d\geq 1\), \(G\) be a triangle-free graph of order \(n\) with \(\chi(G)\leq 3\) and \(\delta(G)>\left\lfloor\frac{(d+1)n}{3d+2} \right\rfloor\), then \(G\) is of \(F_d\)-type.
\end{thm}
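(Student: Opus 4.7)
The plan is to induct on $d$. The base case $d=1$ is the Andrásfai–Erdős–Sós theorem: a triangle-free graph with $\delta(G) > 2n/5$ is bipartite, i.e.\ of $F_1$-type, and the hypothesis $\chi(G) \leq 3$ is automatic. For the inductive step, suppose the theorem holds for $d-1$. Since $(d+1)/(3d+2) < d/(3d-1)$, any $G$ with $\delta(G) > dn/(3d-1)$ is already of $F_{d-1}$-type by induction, and hence of $F_d$-type since $F_{d-1}$ maps homomorphically into $F_d$ (as remarked after the definition of the $F_d$). We may therefore assume $\delta(G)$ lies in the narrow band $\bigl((d+1)n/(3d+2),\; dn/(3d-1)\bigr]$, and aim to construct a homomorphism $\varphi \colon V(G) \to V(F_d)$ by hand.

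In this regime, I would fix a proper $3$-coloring $(A,B,C)$ of $G$, which exists by the hypothesis $\chi(G)\le 3$, and refine it into a $(3d-1)$-partition $\{V_1,\dots,V_{3d-1}\}$ compatible with the circulant structure of $F_d$. To that end, define an equivalence $\sim$ on $V(G)$ in which $u \sim v$ whenever $u,v$ lie in the same color class and have the same neighborhood pattern on a suitably chosen anchor set $X\subseteq V(G)$, say $N(u) \cap X = N(v) \cap X$. Using triangle-freeness together with $\delta(G) > (d+1)n/(3d+2)$, the goal is to show that $\sim$ has at most $3d-1$ classes, whose sizes sum to $n$ and which, placed on a cycle, give a quotient embedding into $F_d$.

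The main technical obstacle is verifying the circulant adjacency: every edge between classes $V_i$ and $V_j$ must satisfy $j-i \pmod{3d-1} \in \{d, d+1, \dots, 2d-1\}$. If some edge $uv$ violated this pattern, then $N(u) \cup N(v)$ would fit into fewer than $2d$ of the classes; combined with triangle-freeness (which yields $N(u) \cap N(v) = \emptyset$) and a lower bound on each class size, this would force $\delta(G) \leq (d+1)n/(3d+2)$, a contradiction. The delicate step is ruling out configurations in which some classes are very small or empty, and this is precisely where the $\chi(G) \leq 3$ hypothesis intervenes to prevent the ``chromatic obstructions'' that would otherwise permit larger values of $d$. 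The combinatorics of these boundary configurations — controlling how an unequal distribution of class sizes interacts with the min-degree threshold — is the most intricate part of the argument and is where the bulk of the work will lie.
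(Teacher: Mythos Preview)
This theorem is not proved in the present paper at all: it is quoted verbatim as a result of Chen, Jin and Koh~\cite{jinchenkoh} and used as a black box (together with Jin's Theorem~\ref{jin}) to derive Corollary~\ref{chenjinkoh}. There is therefore no ``paper's own proof'' to compare your attempt against.

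As for your outline itself, it is a plan rather than a proof, and you say so yourself. The reduction to a narrow degree band via induction and the observation that $F_{d-1}$-type implies $F_d$-type are sound, but from that point on everything is speculative: the ``suitably chosen anchor set $X$'' is never specified, the equivalence relation $\sim$ is not shown to have at most $3d-1$ classes, and the crucial step --- arranging the classes cyclically so that every edge respects the circulant pattern of $F_d$ --- is deferred to unspecified ``combinatorics of boundary configurations''. That is precisely the heart of the Chen--Jin--Koh argument, and it does not follow from a simple counting contradiction of the type you sketch. In short, your proposal identifies a reasonable overall architecture but leaves the genuinely hard part untouched; if you want a self-contained proof you will need to consult~\cite{jinchenkoh} (or~\cite{jin} for the unconditional $d\le 9$ range) rather than reconstruct it here.
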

For our purposes, we use the following statement which follows from Theorem~\ref{jin}, Theorem~\ref{chenjinkohoriginal} and Fact~\ref{fdgraphs}.
\begin{cor}[Chen, Jin, Koh ~\citep{jinchenkoh},~\cite{jin}] 
\label{chenjinkoh}
Every triangle-free graph \(G\) of order \(n\) with \(\delta(G)\geq\frac{5}{14}n\) is of $F_5$-type.
\end{cor}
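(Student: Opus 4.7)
The plan is to chain Theorems~\ref{jin} and~\ref{chenjinkohoriginal} together, using Fact~\ref{fdgraphs} as a bridge. The key observation is that Theorem~\ref{chenjinkohoriginal} requires both a minimum-degree hypothesis \emph{and} the assumption \(\chi(G) \leq 3\), whereas the corollary we want to prove has only a minimum-degree hypothesis. The purpose of Jin's Theorem~\ref{jin} in the argument is precisely to supply the \(3\)-colourability for free.

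First I would verify the numerical inequality \(\tfrac{5}{14} > \tfrac{10}{29}\) by cross-multiplication (\(5 \cdot 29 = 145 > 140 = 10 \cdot 14\)). Since \(\delta(G) \geq \tfrac{5}{14}n > \tfrac{10}{29}n\), Theorem~\ref{jin} applies and \(G\) admits a homomorphism to \(F_9\). By Fact~\ref{fdgraphs}, \(F_9\) is \(3\)-colourable, i.e.\ admits a homomorphism to \(K_3\); composing these two homomorphisms yields \(\chi(G) \leq 3\).

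Next I would set \(d = 5\) in Theorem~\ref{chenjinkohoriginal}, whose degree requirement reads \(\delta(G) > \lfloor \tfrac{(d+1)n}{3d+2} \rfloor = \lfloor \tfrac{6n}{17} \rfloor\). A second cross-multiplication gives \(\tfrac{5}{14} > \tfrac{6}{17}\) (as \(85 > 84\)), so \(\tfrac{5n}{14} > \tfrac{6n}{17}\); since the integer \(\delta(G) \geq \lceil \tfrac{5n}{14}\rceil\) strictly exceeds the real number \(\tfrac{6n}{17}\), it must exceed \(\lfloor \tfrac{6n}{17} \rfloor\) as well. Theorem~\ref{chenjinkohoriginal} then concludes that \(G\) is of \(F_5\)-type, as required.

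The argument is essentially two numerical checks plus a composition of homomorphisms, so there is no real obstacle. The only subtlety — and the one thing worth stating explicitly — is that Theorem~\ref{chenjinkohoriginal} cannot be invoked directly from the degree hypothesis alone; the role of Jin's theorem in the chain is exactly to certify that \(\chi(G) \leq 3\) so that the Chen--Jin--Koh theorem becomes applicable with \(d = 5\).
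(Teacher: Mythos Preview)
Your proposal is correct and matches the paper's approach exactly: the paper states (without further detail) that the corollary ``follows from Theorem~\ref{jin}, Theorem~\ref{chenjinkohoriginal} and Fact~\ref{fdgraphs},'' and your chaining of these three results is precisely the intended derivation. The one minor simplification is that you need not pass through \(\lceil 5n/14 \rceil\): from \(\delta(G) \geq \tfrac{5}{14}n > \tfrac{6}{17}n \geq \lfloor \tfrac{6n}{17} \rfloor\) the strict inequality \(\delta(G) > \lfloor \tfrac{6n}{17} \rfloor\) is immediate.
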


To use Lemma~\ref{auxlem1} and Corollary~\ref{chenjinkoh} together we need to make sure that the homomorphism in the statement of Corollary~\ref{chenjinkoh} is surjective. Fortunately, this is not difficult if we allow ourselves to change the target graph.

\begin{lem}
\label{homomorphism}
Let \(\varphi\) be a homomorphism from graph \(G\) to \(F_d\), \(d\geq 2\). Then either \(\varphi\) is  surjective or there exists a homomorphism \({\varphi}'\) from \(G\) to \(F_{d-1}\).
\end{lem}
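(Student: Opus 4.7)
The plan is to reduce the lemma to constructing an explicit homomorphism $\psi: F_d \setminus \{v_{3d-1}\} \to F_{d-1}$ for $d \geq 2$, and then exploit the fact that $F_d$, being a circulant graph on $\mathbb{Z}_{3d-1}$, is vertex-transitive. If $\varphi$ misses some vertex $v_i$, we precompose with a rotation automorphism $\sigma$ of $F_d$ carrying $v_i$ to $v_{3d-1}$; then $\sigma \circ \varphi : G \to F_d$ has image contained in $F_d \setminus \{v_{3d-1}\}$, and $\varphi' := \psi \circ \sigma \circ \varphi$ is the desired homomorphism $G \to F_{d-1}$.

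To construct $\psi$, I would partition the remaining vertices $v_1, \ldots, v_{3d-2}$ into three consecutive blocks of sizes $d-1$, $d$, $d-1$, and with $V(F_{d-1}) = \{u_1, \ldots, u_{3d-4}\}$ set
\[
\psi(v_j) = \begin{cases} u_j & \text{if } 1 \leq j \leq d-1, \\ u_{j-1} & \text{if } d \leq j \leq 2d-1, \\ u_{j-2} & \text{if } 2d \leq j \leq 3d-2. \end{cases}
\]
This identifies exactly the two ``boundary'' pairs $v_{d-1}, v_d$ (both mapped to $u_{d-1}$) and $v_{2d-1}, v_{2d}$ (both mapped to $u_{2d-2}$), collapsing the $3d-2$ remaining vertices down to $3d-4$ classes, matching $|V(F_{d-1})|$. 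Each identified pair has index difference $1 \notin \{d, \ldots, 2d-1\}$ and is therefore non-adjacent in $F_d$, so no loops are created.

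To verify that $\psi$ preserves edges I would case-split on which pair of blocks contains the endpoints of an edge $v_i v_j$ (with $i < j$); within a single block the index difference is at most $d-2 < d$, so no edges arise inside a block. For an edge between Blocks I and II, $j - i \in [d, 2d-2]$, and the image difference $\psi(v_j) - \psi(v_i) = j - i - 1$ lies in $[d-1, 2d-3]$, precisely the edge range of $F_{d-1}$. For Blocks I and III the image difference is $j - i - 2 \in [d-1, 2d-3]$, and for Blocks II and III it is again $j - i - 1 \in [d-1, 2d-3]$. In every case the positive image difference never exceeds $3d-5$, so no cyclic wrap in $F_{d-1}$ needs to be considered. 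The only real obstacle is guessing the correct construction; once one notices that $v_{d-1}, v_d$ and $v_{2d-1}, v_{2d}$ are the natural boundary pairs between the neighbors and non-neighbors of the removed vertex $v_{3d-1}$, the shifts $0, 1, 2$ across the three blocks follow naturally, and the verification reduces to direct arithmetic in $\mathbb{Z}_{3d-1}$.
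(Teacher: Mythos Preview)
Your proof is correct and follows essentially the same approach as the paper: both explicitly construct a homomorphism $F_d\setminus\{v\}\to F_{d-1}$ by partitioning the remaining $3d-2$ vertices into three consecutive arcs and shifting indices by $0,1,2$ (the paper removes $v_1$ rather than $v_{3d-1}$ and leaves the verification as ``easy to check''). One tiny slip: Block~II has size $d$, so the maximal index difference within it is $d-1$, not $d-2$; since $d-1<d$ this does not affect the argument.
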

\begin{proof} Suppose \(\varphi\) is a homomorphism from graph \(G\) to \(F_d\) that is not surjective. Let \(V_i = {\varphi}^{-1}(v_i)\) for all \(i=1,2,\dots, 3d-1\). Without loss of generality suppose \(V_1\) is empty. Define  a mapping \(\varphi': V(G) \to V(F_{d-1})\)  as follows,
\[{\varphi}'(v) = \begin{cases} v_{i-1}, &\mbox{ if }  v\in V_i \textit{ and } 2\leq i \leq d-1,\\
v_{d-1}, & \mbox{ if }  v\in V_d \cup V_{d+1}, \\
v_{i-2},&\mbox{ if } v\in V_i \textit{ and }  d+2\leq i \leq 2d-2,\\
v_{2d-3},&\mbox{ if } v\in V_{2d-1} \cup V_{2d},\\
v_{i-3,}&\mbox{ if } v\in V_i \textit{ and }  2d+1\leq i \leq 3d-1.
 \end{cases}\]
It is easy to check that $\varphi'$ is a homomorphism from \(G\) to \(F_{d-1}\).
\end{proof}

In the next section we show  that for $1 \leq d \leq 4$ the weighted graph \((F_d,\omega)\) with minimum degree at least \(5/14\), has a sparse half for any weight function \(\omega\). In particular, if $\varphi: V(G) \to F_d$ is a surjective homomorphism, then the weighted graph \((F_d,\omega_{\varphi})\) has a sparse half. By Lemma~\ref{auxlem1} this implies that graph \(G\) has a sparse half. Hence Theorem~\ref{maintheorem1} will follow from the results of the next section.

\section{Weighted triangle-free graphs with minimum degree \(\geq \frac{5}{14}\) }\label{sec:minimum}

\begin{thm}
\label{weightedthm} For all  $1\leq d \leq 5$, if \((F_d,\omega)\) is a weighted graph with minimum degree at least \(5/14\), then it has a sparse half.
\end{thm}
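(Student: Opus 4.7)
Realize $F_d$ as a circulant on $\mathbb{Z}_{3d-1}$ in which $v_j\sim v_{j'}$ iff $j'-j\in\{\pm d,\pm(d+1),\ldots,\pm(2d-1)\}\pmod{3d-1}$. Then $N(v_i)=\{v_{i+d},v_{i+d+1},\ldots,v_{i+2d-1}\}$ is a block of $d$ cyclically consecutive vertices, and the hypothesis $\delta(F_d,\omega)\geq 5/14$ translates to: every cyclic block of $d$ consecutive vertices has $\omega$-weight at least $5/14$. The case $d=1$ is immediate, since $F_1=K_2$ and placing all of the weight $1/2$ on the heavier endpoint gives a half with $s(E)=0$.

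For $d\geq 2$ I would split into two cases. By Fact~\ref{fdgraphs} the only maximum independent sets of $F_d$ are the neighborhoods $N(v_i)$, so the weighted independence number satisfies $\alpha_\omega(F_d)=\max_i\omega(N(v_i))$. If this maximum is at least $1/2$, take $s=\omega$ on a sub-block of $N(v_{i^*})$ of total weight $1/2$; the resulting half is independent and hence $s(E)=0$. Otherwise we may assume $\omega(N(v_i))\in[5/14,1/2)$ for every $i$, so all $d$-block $\omega$-weights lie in this narrow range.

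In the latter case I would find a slightly longer cyclic arc with $\omega$-weight at least $1/2$ and only a few internal edges. Consider the arcs $I_i=\{v_i,v_{i+1},\ldots,v_{i+k_d}\}$, where $k_d$ is chosen minimally so that $(k_d+1)/(3d-1)\geq 1/2$; concretely $k_d=2,4,5,6$ for $d=2,3,4,5$ respectively. Each vertex lies in $k_d+1$ arcs, so $\sum_i\omega(I_i)=k_d+1\geq (3d-1)/2$, and hence some arc $I_{i^*}$ has $\omega$-weight at least $1/2$. The edges inside $I_{i^*}$ are the pairs of indices at distance in $\{d,d+1,\ldots,k_d\}$: one edge for $d\in\{2,3\}$ and three edges for $d\in\{4,5\}$. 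I then define $s$ supported on $I_{i^*}$ by setting $s(v)=\omega(v)$ on the middle independent sub-arc $\{v_{i^*+k_d-d+1},\ldots,v_{i^*+d-1}\}$ (which induces no edges inside $I_{i^*}$) and distributing the residual weight $1/2-$(middle weight) among the boundary vertices of $I_{i^*}$. Then $s(E)$ reduces to a short bilinear form in the boundary values, which I bound by AM--GM together with the lower bound $5/14$ on every $d$-block straddling the boundary (this constrains the boundary weights from above).

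The main obstacle is the case $d=5$: there $3d-1=14$ and $k_d+1=7$, so the averaging inequality is \emph{tight} at $1/2$, and the uniform weighting $\omega\equiv 1/14$ already saturates the hypothesis with zero slack. The three boundary edges $v_{i^*}v_{i^*+5}$, $v_{i^*+1}v_{i^*+6}$, $v_{i^*}v_{i^*+6}$ must all be controlled simultaneously, which forces a careful choice of shift $i^*$ (I would select the one minimizing the total boundary $\omega$-weight) and, in the extremal configurations where $\omega$ concentrates on few vertices, a fallback construction using an alternative shift $i^*\pm 1$. A direct check of the extremal uniform configuration gives $s(E)=3/196<1/50$, and a convexity argument in the boundary weights then extends the bound to the general case. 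The cases $d=2,3,4$ enjoy strict inequality $(k_d+1)/(3d-1)>1/2$ and are handled by the same template with modest case analysis.
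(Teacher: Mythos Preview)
Your framing is right at the start --- the case split on whether some $d$-block has weight $\geq 1/2$ matches the paper --- but the analysis of the second case has a serious conceptual gap and several smaller errors.

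The most important miss is $d=5$. You treat it as the hard case, but in fact the hypothesis \emph{pins down $\omega$ completely}: summing the three block inequalities starting at $i$, $i+5$, $i+10$ covers every vertex once except $v_i$ twice, giving $1+\omega(v_i)\ge 3\cdot\tfrac{5}{14}$, hence $\omega(v_i)\ge \tfrac{1}{14}$ for every $i$; since the fourteen weights sum to $1$, equality is forced and $\omega\equiv\tfrac{1}{14}$. Your ``direct check of the extremal uniform configuration'' is then the \emph{entire} case, and no convexity argument is needed. The paper handles $d=5$ in two lines for this reason.

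Second, your single-shift strategy is too weak where it matters. For $d=2$ with the uniform weighting, your AM--GM bound on a single $3$-arc gives $s(E)\le\tfrac14(\tfrac12-\tfrac15)^2=\tfrac{9}{400}>\tfrac1{50}$, so choosing one $i^*$ and bounding crudely fails at the very configuration that is extremal. The paper avoids this by defining a half $s_i$ for \emph{every} shift and proving $\sum_i s_i(E(G))\le (3d-1)/50$ via a direct algebraic identity (Jensen for $d=2$, explicit quadratic estimates for $d=3,4$); picking the best shift then suffices. The hard work sits in $d=3,4$, where the summed inequality still requires nontrivial lemmas (Lemmas~\ref{8cycletechnicallem} and~\ref{11cycletechnicallem}); your ``AM--GM together with the $5/14$ lower bound'' does not substitute for these. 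There is also an internal inconsistency in your $d=3$ computation: the minimal $k_d$ with $(k_d+1)/8\ge\tfrac12$ is $k_3=3$, not $4$, and with $k_3=3$ the averaging is again tight (so the obstacle you attribute to $d=5$ already appears); if instead you take $k_3=4$ as stated, the $5$-arc contains three edges, not one.
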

\begin{proof}
The argument is separated into cases based on the value of \(d\).

\vskip 5pt \noindent
\textbf{ d=1:}  Suppose \(V(F_1)=\{v_1,v_2\}\) then since \(\omega(v_1) +\omega(v_2) = 1\), either \(\omega(v_1)\geq \frac{1}{2}\) or \(\omega(v_2) \geq \frac{1}{2}\), therefore \(v_1\) or \(v_2\) supports a sparse half in \((F_1,\omega)\).

\vskip 5pt \noindent
\textbf{ d=2:} Let \(V(F_2) = \{v_1, v_2, \dots, v_5\}\). If any two consecutive vertices together have total weight at least \(1/2\), then they induce an independent set, which means that they support a sparse half.

Now suppose that no two consecutive vertices have total weight at least \(1/2\), then any three consecutive vertices have total weight at least \(1/2\).
We define the following halves \(\pl{s}_i\) for each \(i=1,2,\dots, 5\) on the vertices of the graph and prove that there is at least one sparse half among them.
\[\pl{s}_i(v) = \begin{cases} \omega(v), &\mbox{ if }  v=v_i \textit{ or } v=v_{i+1},\\
\frac{1}{2} - \left(\omega(v_{i}) +  \omega(v_{i+1})\right) , & \mbox{ if }  v=v_{i+2}, \\
0,&\mbox{ otherwise.}
 \end{cases}\]

\begin{figure}[H]
\label{K3,3}
\begin{center}
\begin{tikzpicture}[label distance=3mm, line width=2]

every node/.style=draw,
    every label/.style=draw

\tikzstyle{every node}=[draw,circle,fill=black,minimum size=12pt,
                            inner sep=0pt]

  \node[shape=circle split,
    draw=gray!10,
    line width=0mm,text=white,font=\bfseries,
    circle split part fill={white,black}
    ] at (0,0) {\tiny\strut};

    \draw (0,0) node (v3) [draw=black,fill=none,label={306:$v_3$}]     {}
           ++(72:2cm) node (v2) [draw=black,fill=white, label={18:$v_2$}] {}
           ++(144:2cm) node (v1) [draw=black,fill=white, label={90:$v_1$}] {}
           ++(216:2cm) node (v5) [draw=black, label={162:$v_5$}] {0}
           ++(288:2cm) node (v4) [draw=black, label={234:$v_4$}] {0};

     \draw (v1)--(v3);
     \draw (v1)--(v4);
     \draw (v2)--(v4);
     \draw (v2)--(v5);
     \draw (v3)--(v5);

\end{tikzpicture}
\end{center}
\caption{\small A sparse half in uniformly weighted \(C_5\)}
\end{figure}
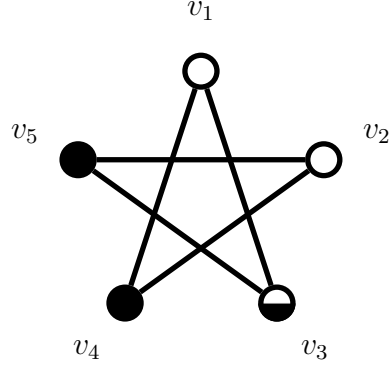

Note that
\begin{equation}
\label{c5sparse}
\pl{s}_i(E(G)) = \omega(v_i) \left(1/2 - \left(\omega(v_i) + \omega(v_{i+1})\right)\right).
\end{equation}
Summing up the equations~(\ref{c5sparse}) over all \(i = 1, \dots, 5\), we get
\begin{align*}
\label{5thcycle}
\sum_{i=1}^5{\pl{s}_i(E(G))} &= 1/2 - \sum_{i=1}^{5}{\omega(v_i)\left(\omega(v_i) + \omega(v_{i+1})\right)} \\
&=\frac{1}{2}-\frac{1}{2}\sum_{i=1}^{5}{\left(\omega(v_i) + \omega(v_{i+1})\right)^2} \\
&\leq \frac{1}{2}-\frac{1}{2}\cdot 5 \cdot\frac{4}{25}= \frac{1}{10},
\end{align*}
using Jensen's inequality for the function $x^2$. Thus one of the functions \(\pl{s}_i\) is a sparse half. Note that the proof above did not use the minimum degree condition, while we use it in the other two cases.

\vskip 5pt \noindent
\textbf{d=3:} Let \(F_3 = \{v_1, v_2, \dots, v_8\}\). As the minimum degree of \((F_3,{\omega})\) is at least \(5/14\), we have
\begin{equation}
\label{eqf3}
\omega(v_j )+ \omega(v_{j+1}) +\omega(v_{j+2}) \geq\delta(F_3,{\omega})  \geq 5/14,
\end{equation}
for all \(j = 1,2,\dots,8\). Summing these inequalities for \(j=i,i+3\) and \(j=i+6\), we  obtain \(\omega(v_j) \geq 1/14\) for \(j = 1,2,\dots,8\).

As in the case of $d=2$, if there exist three consecutive vertices of total weight at least \(1/2\), then we are done, since they induce an independent set. Suppose not, then every five consecutive vertices  have total weight more than \(1/2\). We define the following halves \(\pl{s}_i\), for each \(i=1,2,\dots, 8\) on the vertices of the graph and prove that there is at least one sparse half among them.

\[\pl{s}_i(v) = \begin{cases}
\omega(v) , & \mbox{ if }  v=v_{i+1}, v_{i+2},v_{i+3}   \\
\frac{1}{2}\left( {\frac{1}{2} - \left(\omega(v_{i+1}) +  \omega(v_{i+2}) +\omega(v_{i+3})\right) } \right), &\mbox{ if }  v=v_i \textit{ or } v=v_{i+4},\\
0,&\mbox{ otherwise.}
 \end{cases}\]
\begin{claim} For each \(i =1,2,\dots,8\), \(\pl{s}_i\) is a  half.
\end{claim}
\begin{proof}
It suffices to show that
\[\frac{1}{2}\left( {\frac{1}{2} - (\omega(v_{i+1}) +  \omega(v_{i+2}) +  \omega(v_{i+3})) } \right) \leq \omega(v_i) ,\]
\[\frac{1}{2}\left( {\frac{1}{2} - (\omega(v_{i+1}) +  \omega(v_{i+2}) +  \omega(v_{i+3})) } \right) \leq \omega(v_{i+4}). \]
By symmetry it suffices to prove the first inequality.   By~(\ref{eqf3}) we get that
\[\frac{1}{2}\left( {\frac{1}{2} - (\omega(v_{i+1}) +  \omega(v_{i+2}) + \omega(v_{i+3})) } \right) \leq \frac{1}{2}\cdot \left(\frac{1}{2}- \frac{5}{14}\right) = \frac{1}{14} \leq \omega(v_i).\]
This finishes the proof of the claim.
\end{proof}

We relegate the proof of the following lemma, which ensures that one of the halves $\pl{s}_i$ is sparse, to the Appendix.
\begin{lem}
\label{8cycletechnicallem} Let \(1/14\leq x_i\leq 1\) for \(i=1,2,\dots,8\). If \(\sum_{i=1}^{8}{x_i} = 1\) and \(x_i + x_{i+1} + x_{i+2} \geq 5/14\) for every \(i\), then there exists \(i\) such that
\begin{equation}
\label{8cycle}
\frac{1}{2}\left(\frac{1}{2}-(x_i+x_{i+1}+x_{i+2})\right)\left(x_i +x_{i+2}\right) + \frac{1}{4}\left(\frac{1}{2}-(x_i+x_{i+1}+x_{i+2})\right)^2 \leq \frac{1}{50}.
\end{equation}
\end{lem}

\noindent \textbf{d=4:}  Let \(F_4 = \{v_1, v_2, \dots, v_{11}\}\). The minimum degree condition gives us the following inequality
\begin{equation}
\label{eq1}
\omega(v_i) + \omega(v_{i+1}) +\omega(v_{i+2}) + \omega(v_{i+3}) \geq 5/14,
\end{equation}
for all \(i = 1,2,\dots,11\). It follows, as in the case $d=3$, \(\omega(v_i)\geq 1/14\) for all \(i\).  We may assume that no four consecutive vertices have total weight at least \(1/2\), otherwise, we are done - these four vertices induce an independent set. Therefore, we can assume that 
 \[\omega(v_i) +\omega(v_{i+1}) + \dots + \omega(v_{i+6}) > 1/2,\]
for all \(i=1,2,\dots, 11\). This allows us to define halves $\pl{s}_i$  in the following way:
\[\pl{s}_i(v) = \begin{cases}
\omega(v) , & \mbox{ if }  v=v_{i+1}, v_{i+2},v_{i+3}, v_{i+4},  \\
\frac{1}{2}\left( {\frac{1}{2} - \left(\omega(v_{i+1}) +  \omega(v_{i+2}) +\omega(v_{i+3}) +\omega(v_{i+4}) \right) } \right), &\mbox{ if }  v=v_i \textit{ or } v=v_{i+5},\\
0,&\mbox{ otherwise.}
 \end{cases}\]

As in the previous case, it is easy to verify that each $\pl{s}_i$ is a half, and the following lemma proved in the Appendix implies that at least one of these halves is sparse.

\begin{lem}
\label{11cycletechnicallem}
Suppose \(x_1, x_2, \dots , x_{11}\) are reals such that \(1/14\leq x_i\leq 1\) for each \(i=1,2,\dots,11\) and \(\sum_{i=1}^{11}{x_i} = 1\). If \(x_i + x_{i+1} + x_{i+2}  + x_{i+3}\geq 5/14\) for every \(i\), then there exists \(i\) such that
 \begin{align*}
&\frac{1}{2}\left(\frac{1}{2} - (x_{i+1} +x_{i+2}+x_{i+3} + x_{i+4})\right)(x_{i+1} + x_{i+4}) \\
&+ \frac{1}{4}\left(\frac{1}{2} - (x_{i+1} +x_{i+2}+x_{i+3} + x_{i+4})\right)^2 \leq 1/50.
\end{align*}
\end{lem}
\noindent \textbf{d=5:}  Let \(F_5 = \{v_1, v_2, \dots, v_{14}\}\). As in the previous cases, the minimum degree condition gives us \(\omega(v_i) \geq \frac{1}{14}\) for every \(i=1, 2,\dots, 14\). But on the other hand, \(\sum_{i=1}^{14}{\omega(v_i)} = 1\), hence \(\omega(v_i) = 1/14\), for each \(i=1, 2, \dots, 14\). Define \[\pl{s}(v) = \begin{cases}
\omega(v) , & \mbox{ if }  v=v_{1}, v_2,v_3,v_4,v_5,v_6,v_7  \\
0,&\mbox{ otherwise.}
 \end{cases}\]
It is easy to check that \(\pl{s}\) is a half. In fact, it is a sparse half. Indeed, \(\pl{s}(E(G)) = \frac{3}{14^2}\leq \frac{1}{50}\).\end{proof}
\begin{proof}[Proof of Theorem~\ref{maintheorem1}]
Let $G$ be a triangle-free graph on $n$ vertices with minimum degree $\geq 5n/14$. By Lemma~\ref{auxlem1} it suffices to prove that the uniformly weighted graph $(G,\omega)$ has a sparse half. By Theorem~\ref{chenjinkoh} and Lemma~\ref{homomorphism}, the graph $G$ admits a surjective homomorphism $\varphi$ to $F_d$ for $1 \leq d\leq 5$.
Clearly, $(F_d,\omega_{\varphi})$ has minimum degree $\geq 5/14$ and thus has a sparse half by Theorem~\ref{weightedthm}. Theorem~\ref{maintheorem1} now follows from Lemma~\ref{auxlem2}.
\end{proof}

\section{Uniform sparse halves, balanced weights and disturbed subgraphs}\label{s:uniform}

The purpose of this section is to develop a set of tools, that,  under fairly general circumstances, allow us to show that graphs ``close" to a fixed graph have sparse halves. A number of technical definitions will be necessary. We start by a variant of the definition of \emph{edit distance} (see e.g.~\cite{lovasz}).

\begin{definition}\label{d:epsClose}
Given graphs \(G\) and \(H\) of orders \(n\) and \(k\) respectively, we say that \(G\) can be \(\eps\)-\emph{approximated} by \(H\), if there exists a partition  \(\mathcal{V} = \{V_1, V_2, \dots, V_k\}\)  of \(V(G)\) such that
\[\left||V_i|-\frac{n}{k}\right| \leq \eps n, \textrm{ for all } i=1,2,\dots k, \textrm { and } \pl{e}(G \triangle H_{\mathcal{V}}) \leq \eps n^2,\]where
\(H_{\mathcal{V}}\) is a graph on vertex set \(V(G)\) with adjacency defined as follows. Each \(H_{\mathcal{V}}[V_i]\) is an independent set, \((V_i, V_j)\) induces a complete bipartite graph in \(H_{\mathcal{V}}\) if \(v_iv_j \in E(H)\) and an empty graph, otherwise.
\end{definition}

We will also need a stronger notion of distance defined as follows.

\begin{definition}
Given a graph \(G\) and \(F\subseteq E(G)\), we say that \(D\subseteq V(G)\) is an \emph{\(\eps\)-covering} set for \(F\), if \(|D|\leq \eps \pl{v}(G)\) and every edge in \(F\) has at least one end in \(D\). 
\end{definition}
\begin{definition}
We say that the graph \(G\)  is an \emph{\(\eps\)-disturbed} subgraph of \(G'\) for some \(0<\eps<1\), if the following conditions hold:
\begin{enumerate}
\item \(V(G)= V(G')\),
\item there exists an \(\eps\)-covering set for \(E(G)\setminus E(G')\) in $G$,
\item \(|N_{G}(v) \setminus N_{G'}(v)|\leq \eps \pl{v}(G')\) for every vertex \(v\in V(G').\)
\end{enumerate}
\end{definition}

Note that if \(G\) is an \(\eps\)-disturbed subgraph of some graph \(G'\), it does not imply that \(G\) is a subgraph of \(G'\), as one might expect. However, from our definition, it follows that \(G\) is not "far" from some subgraph of \(G'\) and that is why we use \(\eps\)-disturbed term.

For a graph \(H\), let $\mc{N}(H)$ be the set of neighborhoods of vertices of $H$ and $\mc{I}(H)$ be the set of maximum independent sets of $H$. Let $\mc{I}^{*}(H) = \mc{I}(H) \setminus \mc{N}(H)$. We construct a graph $H^{*}$ as follows. Let $V(H^{*})=V(H) \cup \mc{I}^{*}(H)$, $H$ be an induced subgraph of $H^{*}$, and let every $I \in \mc{I}^{*}(H)$ be adjacent to every $v \in I$ and no other vertex of $H^{*}$.
We say that a weighted graph $(H^{*},\omega)$ is \emph{$\eps$-balanced} if $|\omega(v) - 1/\pl{v}(H)| \leq \eps$ for every $v \in V(H)$ and $\omega(v) \leq \eps$ for every $v \in V(H^{*})\setminus V(H)$. We are  now ready for our first technical result.

\begin{thm}\label{t:DisturbedGeneral} Let $H$ be a maximal triangle-free graph. For every $\eps>0$ there exists $\delta>0$ such that if $G$ is a triangle-free graph which can be $\delta$-approximated by $H$
then there exists a graph $G'$ and a homomorphism $\varphi$ from $G'$ to $H^{*}$ with the following properties.
\begin{enumerate}
\item[(i)] $G$ is an $\eps$-disturbed subgraph of $G'$,
\item[(ii)] $(H^{*},\omega_\varphi)$ is $\eps$-balanced, where \(\omega\) is the uniform weight function defined on \(G\),
\item[(iii)] $\varphi$ is a \emph{strong homomorphism}, that is, $uv \not \in E(G')$ implies $\varphi(u)\varphi(v) \not \in E(H^{*})$ for every pair of vertices $u,v \in V(G')$.
\end{enumerate}
\end{thm}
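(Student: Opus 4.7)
My plan is to construct the homomorphism $\varphi : V(G) \to V(H^*)$ explicitly from the approximating partition and take $G'$ to be the lift $\{uv : \varphi(u) \neq \varphi(v),\ \varphi(u)\varphi(v) \in E(H^*)\}$ of $H^*$ through $\varphi$, which makes~(iii) automatic. Set $k := \pl{v}(H)$ and choose $\delta$ very small in $\eps, k$ (e.g.\ $\delta \leq \eps^2/(100k^2)$); let $\{V_1,\dots,V_k\}$ be the $\delta$-approximation partition with $v_i \in V(H)$ corresponding to $V_i$. Define the \emph{bad} set
\[B := \{v \in V(G) : |N_G(v) \triangle N_{H_{\mathcal{V}}}(v)| \geq \sqrt{\delta}\,n\};\]
averaging $\sum_v |N_G(v) \triangle N_{H_{\mathcal{V}}}(v)| = 2\pl{e}(G \triangle H_{\mathcal{V}}) \leq 2\delta n^2$ yields $|B| \leq 2\sqrt{\delta}\,n$. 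For $v \in V_i \setminus B$ I set $\varphi(v) := v_i$.

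The crucial use of triangle-freeness is the following \emph{covering claim}: every edge of $E(G) \setminus E(H_{\mathcal{V}})$ has at least one endpoint in $B$. Indeed, if $uv \in E(G)$ with $u \in V_i$, $v \in V_j$, $v_iv_j \notin E(H)$ (possibly $i=j$), and both $u,v \notin B$, then since $H$ is maximal triangle-free and has no isolated vertices, $v_i$ and $v_j$ share a common neighbor $v_\ell$ in $H$ (any $v_\ell \in N_H(v_i)$ if $i=j$). Goodness gives $|N_G(u) \cap V_\ell|,\ |N_G(v) \cap V_\ell| \geq |V_\ell| - \sqrt{\delta}\,n$, so these two large subsets of $V_\ell$ meet in some $y$, producing a triangle $\{u,v,y\}$ in $G$---contradiction. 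For $v \in B$ let $J(v) := \{j : |N_G(v) \cap V_j| \geq \eps_2 n\}$ with $\eps_2 := \eps/(4k)$. The same triangle-free argument, applied to the would-be complete bipartite graph in $H_{\mathcal{V}}$ between $N_G(v) \cap V_{j_1}$ and $N_G(v) \cap V_{j_2}$, forces $S_v := \{v_j : j \in J(v)\}$ to be an independent set of $H$. I then pick $\varphi(v) =: T_v \in V(H^*)$ with $S_v \subseteq N_{H^*}(T_v) \cap V(H)$: either $S_v$ extends to some $I_v \in \mc{I}(H)$, in which case $T_v := w$ if $I_v = N_H(w)$ and $T_v := I_v \in \mc{I}^*(H)$ otherwise; or $S_v \subseteq N_H(w)$ for some common neighbor $w$, and then $T_v := w$.

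Property~(iii) is automatic. For~(ii), the counts $|\varphi^{-1}(v_i)| = |V_i| \pm |B|$ and $|\varphi^{-1}(I)| \leq |B|$ for $I \in \mc{I}^*(H)$ give $|\omega_\varphi(v_i) - 1/k| \leq \delta + 2\sqrt{\delta} \leq \eps$ and $\omega_\varphi(I) \leq 2\sqrt{\delta} \leq \eps$. For~(i), the covering claim together with the trivial observation that good-good edges of $G$ already lying in $H_{\mathcal{V}}$ automatically lie in $G'$ shows that $B$ itself is an $\eps$-covering set for $E(G) \setminus E(G')$; the wrong-degree bound is checked by decomposing $N_G(v)$ over the parts, giving a deficit of at most $\sqrt{\delta}\,n + |B|$ for good $v \in V_i \setminus B$ and at most $k \eps_2 n + |B|$ for $v \in B$ (using that $T_v$ absorbs $\bigcup_{j \in J(v)}(V_j \setminus B)$ into $N_{G'}(v)$). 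Both are $\leq \eps n$ by our parameter choices. \textbf{The main obstacle} I anticipate lies in the target-selection step for $v \in B$: for the construction to succeed one must verify that in a \emph{general} maximal triangle-free $H$ every independent set is contained in either a neighborhood $N_H(w)$ or in a maximum independent set of $H$, so that a valid $T_v \in V(H^*)$ exists for every possible $S_v$. For the Petersen and $C_5$ applications in this paper this is a finite check; in full generality it appears to require a structural lemma on maximal triangle-free graphs.
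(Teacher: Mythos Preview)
Your construction is essentially identical to the paper's: the paper defines the same bad set (called $J$ there) of vertices incident to at least $\sqrt{\delta}n$ edges of $E(G)\triangle E(H_{\mc V})$, sets $\varphi(v):=v_i$ for good $v\in V_i$, for bad $v$ takes the independent set $I_0(v):=\{i:|N(v)\cap V_i|>\sqrt{\delta}n\}$, extends it to a larger independent set $I(v)$, defines $\varphi(v)$ from $I(v)$, and lets $G'$ be the pullback of $H^*$ through $\varphi$. The verification of (i)--(iii) proceeds exactly along your lines; your use of a second threshold $\eps_2$ in place of the paper's single $\sqrt{\delta}$ is a cosmetic difference.

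The obstacle you flag at the end is genuine, and it is worth noting that the paper's proof glosses over the very same point. The paper extends $I_0(v)$ to a \emph{maximal} independent set $I(v)$ and then, when $I(v)$ is not a neighborhood, sets $\varphi(v):=I(v)$ --- but $V(H^*)\setminus V(H)=\mc I^*(H)$ consists of \emph{maximum} independent sets, so this step tacitly assumes that every maximal independent set of $H$ is either a neighborhood or maximum. For an arbitrary maximal triangle-free $H$ this need not hold, so as stated the argument (both yours and the paper's) is incomplete in full generality. For the two graphs the paper actually applies the theorem to, $C_5$ and the Petersen graph, every maximal independent set is maximum (indeed for $C_5$ every maximal independent set is a neighborhood, so $\mc I^*(C_5)=\emptyset$), and the construction goes through without difficulty. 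Your instinct to isolate this as the one nontrivial structural assumption is exactly right.
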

\begin{proof}
We assume that $V(H)=\{1,2,\ldots,k\}$. We show that $\delta>0$ satisfies the theorem if $(k+2)\sqrt{\delta} \leq \min(\eps,1/k)$.
Let $\mc{V}=(V_1,V_2,\ldots,V_k)$ be the partition of $V(G)$, and the graph $H_{\mc{V}}$ be as in Definition~\ref{d:epsClose}. Let $n:=\pl{v}(G)$,  $F:=E(G) \triangle E(\mc{H}_\mc{V})$ and $J$ be the set of all vertices of $G$ incident to at least $\sqrt{\delta}n$ edges in $F$. We have $\frac{1}{2}\sqrt{\delta}n|J| \leq |F| \leq \delta n^2 $. It follows that $|J| \leq 2\sqrt{\delta}n$.

We define a map $\varphi: V(G) \to V(H^{*})$, as follows. If $v \in V_i\setminus J$ for some $i \in V(H)$ then $\varphi(v):=i$. Now consider $v \in J$ and let $$I_0(v) :=\{i \in V(G) \: | \: |N(v) \cap V_i| > \sqrt{\delta}n\}.$$
Then $I_0(v)$ is independent, as otherwise there exist $i,j \in [k]$, such that $ij \in E(H)$, $|N(v) \cap V_i| > \sqrt{\delta}n$ and  $|N(v) \cap V_j| > \sqrt{\delta}n$. As $G$ is triangle-free it follows that $\pl{e}(H_{\mc{V}}\triangle G) > \eps n^2$, contradicting the choice of $\mc{V}$. Let $I(v)$ be a maximal independent set containing $I_0(v)$, chosen arbitrarily. Define $\varphi(v):=i$, if $I(v)=N_H(i)$ for some $i \in V(H)$, and $\varphi(v):=I(v)$, otherwise.

Let $G'$ be the graph with $V(G')=V(G)$ and the vertices $uv \in E(G')$ if and only if $\varphi(u)\varphi(v) \in E(H^{*})$. Then $\varphi$ is a strong homomorphism from $G'$ to $H^{*}$. For $i \in V(H)$ we have $V_i \setminus J \subseteq \varphi^{-1}(i) \subseteq V_i \cup J$ and, therefore, $||\varphi^{-1}(i)|/n - 1/k| \leq \delta +2\sqrt{\delta}.$ For $I \in V(H^{*})\setminus V(H)$ we have $|\varphi^{-1}(I)| \leq |J| \leq 2\sqrt{\delta}n$. Thus (ii) holds, as  $\eps \geq \delta +2\sqrt{\delta}.$

It remains to verify (i). We will show that $J$ is an $\eps$-covering set for $F':=E(G)\setminus E(G')$ for $\delta$ sufficiently small. First, we show that every edge of $F'$ has an end in $J$. Indeed suppose that $uv \in E(G)$ for some $u \in V_i,v \in V_j$ with $i,j \in V(H)$ not necessarily distinct and $ij \not \in E(H)$. Then there exists $h \in V(H)$ adjacent to both $i$ and $j$, as $H$ is maximally triangle-free. It follows that both $v$ and $u$ have at least $(1/k -\delta -\sqrt{\delta})n$ neighbors in $V_h$ and share a common neighbor if $2(1/k - \delta  - \sqrt{\delta}) > 1/k$. Thus our first claim holds, as $\delta + \sqrt{\delta} < 1/k$.

Consider now $v \in J$ and let $N'(v)$ be the set of neighbors of $v$ in $V(G)\setminus V(J)$ joined to $v$ by edges of $F'$. Then $|N'(v) \cap V_i| \leq \sqrt{\delta}n$ for every $i \in V(H)$ by the choice of $\varphi(v)$. It follows that $|N'(v)| \leq k\sqrt{\delta}n$. Therefore $v$ is incident to at most $(k+2)\sqrt{\delta}n$ edges in $F$, as $|J| \leq 2 \sqrt{\delta}n.$ Thus $J$ is  an $\eps$-covering set for $F'$, as $(k+2)\sqrt{\delta}n \leq \eps.$
\end{proof}

We say that $H$ is \emph{entwined} if $\mc{I}^{*}(H)$
is intersecting. Note that, using Fact~\ref{fdgraphs}, one can see that $F_d$ is entwined for every $d\geq1$,  as $\mc{I}^{*}(F_d)$ is empty. It is routine to check that the Petersen graph is entwined.
We say that a graph \(G\) of order \(n\)  is \emph{\(c\)-maximal triangle-free}  if it is triangle-free and adding any new edge to \(G\) creates at least \(cn\) triangles. For a weighted graph \((G,\omega)\), we say that it is \emph{\(c\)-maximal triangle-free}  if \(G\) is triangle-free and adding any new edge to \(G\) creates triangles of total weight at least \(c\).

\begin{lem}\label{l:EntwinedToCMaximal}
Let $H$ be an entwined maximal triangle-free graph, and let $0<\eps<1/\pl{v}(H)$. If $(H^{*},\omega)$ is $\eps$-balanced then it is $(1/\pl{v}(H)-\eps)$-maximal triangle-free.
\end{lem}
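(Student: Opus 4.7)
The plan is to verify both halves of the definition of \((1/\pl{v}(H)-\eps)\)-maximal triangle-freeness for \((H^{*},\omega)\): first, that \(H^{*}\) is triangle-free, and second, that adding any non-edge produces triangles of total weight at least \(1/\pl{v}(H)-\eps\). Reading the total weight of triangles through an inserted edge \(uv\) as \(\sum_w \omega(w)\) over the common neighbors \(w\) of \(u\) and \(v\) (the natural weighted analogue of the unweighted triangle-count), it suffices to exhibit one common neighbor that lies in \(V(H)\), since \(\eps\)-balance then yields \(\omega(w) \geq 1/\pl{v}(H) - \eps\).

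Triangle-freeness of \(H^{*}\) is immediate from the construction: \(H\) is triangle-free by hypothesis; two distinct vertices of \(V(H^{*})\setminus V(H)\) are non-adjacent; and each new vertex \(I \in \mc{I}^{*}(H)\) has neighborhood equal to the independent set \(I \subseteq V(H)\), which spans no edge of \(H\). So no triangle of \(H^{*}\) can use a vertex of \(V(H^{*})\setminus V(H)\), and none sits inside \(H\) either.

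For the main step I would case-split on a non-edge \(uv\) of \(H^{*}\) according to whether each of \(u,v\) lies in \(V(H)\) or in \(\mc{I}^{*}(H)\). If both lie in \(V(H)\), then maximality of \(H\) supplies a common neighbor \(w \in V(H)\) directly. If \(u \in V(H)\) and \(v=I \in \mc{I}^{*}(H)\), then \(u \notin I\) (otherwise \(uv\) is already an edge of \(H^{*}\)), so \(I\cup\{u\}\) cannot be independent by the maximum-ness of \(|I|\); any resulting neighbor \(w \in I\) of \(u\) is then adjacent in \(H^{*}\) to both \(u\) and to \(v=I\), and \(w \in V(H)\). Finally, if \(u=I_1\) and \(v=I_2\) both lie in \(\mc{I}^{*}(H)\), the entwined hypothesis forces \(I_1 \cap I_2 \neq \emptyset\), and any vertex \(w \in I_1 \cap I_2 \subseteq V(H)\) is adjacent in \(H^{*}\) to both \(u\) and \(v\). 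In each case the chosen common neighbor lies in \(V(H)\), so by \(\eps\)-balance its weight contributes at least \(1/\pl{v}(H)-\eps\) to the total weight of new triangles, which is exactly the bound required. I do not expect a real obstacle: the only non-routine input is the entwined hypothesis, invoked in precisely the one case (two new vertices) where nothing else can produce a common neighbor.
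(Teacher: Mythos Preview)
Your argument is correct. The paper actually states this lemma without proof, presumably regarding it as a routine verification; your write-up supplies exactly the expected argument, with the case split on the location of the endpoints of the inserted non-edge and the entwined hypothesis invoked precisely where it is needed.
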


\begin{definition}
For a weighted graph \((G,\omega)\) we call a distribution \(\textbf{s}\) defined on  the set of halves of $(G,\omega)$
a \emph{\(c\)-uniform sparse half} for some \(0< c\leq 1\), if
\begin{enumerate}
\item for every edge \(e\in E(G)\), \(\E{\textbf{s}(e)} \geq c\:{\omega(e)}\),
\item \(\E{\textbf{s}(E(G))} \leq \frac{1}{50}\).
\end{enumerate}
\end{definition}
Whenever we refer to $c$-uniform sparse halves in unweighted graphs, they are understood as the $c$-uniform sparse halves in the corresponding uniformly weighted graphs.

\begin{thm}
\label{cuniformtouniform}
Let \(0<c<1\) be real, \(G'\) be a \(c\)-maximal triangle-free graph and \(G\) be a triangle-free \(\frac{c^2}{2(1+c)}\)-disturbed subgraph of \(G'\). If $G'$ has a $c$-uniform sparse half then \(G\) has a sparse half.
\end{thm}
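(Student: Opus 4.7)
The plan is to use the distribution $\textbf{s}$ witnessing the $c$-uniform sparse half of $G'$ verbatim as a distribution on halves of $G$. Since the $\frac{c^2}{2(1+c)}$-disturbed hypothesis guarantees $V(G)=V(G')$, and both graphs carry the same uniform weight $\omega_u(v)=1/n$, every half of $(G',\omega_u)$ is automatically a half of $(G,\omega_u)$. Setting $A := E(G)\cap E(G')$, $B := E(G')\setminus E(G)$, and $F := E(G)\setminus E(G')$, one has
\[
\E{\textbf{s}(E(G))} - \E{\textbf{s}(E(G'))} = \E{\textbf{s}(F)} - \E{\textbf{s}(B)},
\]
so the whole argument reduces to proving $\E{\textbf{s}(F)} \leq \E{\textbf{s}(B)}$; the conclusion then follows by selecting any realization of $\textbf{s}$ attaining the mean.

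The key combinatorial step will be the inequality $|F| \leq c|B|$, obtained by a double count that couples the $c$-maximality of $G'$ to condition (3) of the disturbed definition. For each $uv \in F$, the pair $u,v$ is a non-edge of $G'$, so by $c$-maximality it has at least $cn$ common neighbors in $G'$; since $G$ is triangle-free and $uv \in E(G)$, each such common neighbor $w$ forces at least one of $uw,vw$ to lie in $B$. This defines a map $\phi$ from $\mc{T} := \{(uv,w) : uv\in F,\ w\in N_{G'}(u)\cap N_{G'}(v)\}$ into $B$, with $|\mc{T}| \geq cn|F|$. To bound preimages, fix $e'=xy \in B$: any triple $(uv,w)\in\phi^{-1}(e')$ must have its ``third'' vertex lying in $(N_G(x)\setminus N_{G'}(x))\cap N_{G'}(y)$ or $(N_G(y)\setminus N_{G'}(y))\cap N_{G'}(x)$, each of size at most $\frac{c^2}{2(1+c)}n$ by the disturbed condition. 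Hence every edge of $B$ has at most $\frac{c^2}{1+c}n$ preimages under $\phi$, which yields $cn|F| \leq \frac{c^2 n}{1+c}|B|$, i.e.\ $|F| \leq \frac{c}{1+c}|B| \leq c|B|$.

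To finish I will combine the trivial upper bound $\E{\textbf{s}(F)} \leq |F|/n^2$ (valid since $\pl{s}(v)\leq 1/n$ for every vertex $v$) with the $c$-uniform sparse half property applied edgewise: because $B \subseteq E(G')$, one has $\E{\textbf{s}(e)} \geq c\cdot\omega_u(e) = c/n^2$ for each $e \in B$, and so $\E{\textbf{s}(B)} \geq c|B|/n^2$. Together with $|F| \leq c|B|$, this yields $\E{\textbf{s}(F)} \leq \E{\textbf{s}(B)}$, completing the proof. The main obstacle is the preimage count: the factor of two from the two endpoints of each witness edge imposes a constraint of the form $2\eps/c \leq c$, and the constant $\frac{c^2}{2(1+c)}$ in the hypothesis is tuned precisely so that the slack $\frac{c}{1+c}<c$ suffices to absorb the asymmetry between the trivial upper bound on $\E{\textbf{s}(F)}$ and the only linear lower bound available on $\E{\textbf{s}(B)}$.
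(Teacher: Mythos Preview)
Your argument is correct and in fact cleaner than the paper's. Both proofs reduce to showing $\E{\textbf{s}(F)} \leq \E{\textbf{s}(B)}$ via the edgewise bounds $\E{\textbf{s}(F)} \leq |F|/n^2$ and $\E{\textbf{s}(B)} \geq c|B|/n^2$, so the crux is a combinatorial comparison of $|F|$ and $|B|$. The paper obtains this by picking a maximal matching $M$ in $F$, using the $\eps$-covering set (condition~(2) of the disturbed definition) to bound $|F| \leq 2|M|\eps n$, and then, for each matching edge, using $c$-maximality to find at least $(c-2|M|/n)n$ edges of $B$ with a unique endpoint in $V(M)$; this yields $|F| - c|B| \leq 0$ after checking $|M|/n \leq \eps$. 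Your direct double count of triples $(uv,w)$ with $uv\in F$ and $w\in N_{G'}(u)\cap N_{G'}(v)$ bypasses the matching entirely: lower-bounding $|\mc{T}|$ by $c$-maximality and upper-bounding each fibre $|\phi^{-1}(e')|$ via condition~(3) alone gives the sharper inequality $|F|\leq \frac{c}{1+c}|B|$, and never touches the covering-set hypothesis. One small point you leave implicit is that a fixed edge $e'=xy\in B$ and a fixed third vertex $z$ determine at most one triple in $\mc{T}$; this holds because the two sets $(N_G(x)\setminus N_{G'}(x))\cap N_{G'}(y)$ and $(N_G(y)\setminus N_{G'}(y))\cap N_{G'}(x)$ are disjoint (membership in the first forces $z\in N_{G'}(y)$, membership in the second forces $z\notin N_{G'}(y)$), so your bound $|\phi^{-1}(e')|\leq 2\eps n$ is justified. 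The upshot is that your route shows condition~(2) in the definition of $\eps$-disturbed is not actually needed for this theorem.
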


\begin{proof} Let \(\eps = \frac{c^2 }{2(1+c)}\), \(F:=E(G)-E(G')\) and let \(M\) be a maximal matching in \(F\). Suppose \(|M| = \delta n\) for some \(0\leq \delta \leq 1/2\). Since \(G\) is an \(\eps\)-disturbed subgraph of \(G'\), it has an \(\eps\)-covering set for \(F\). Let \(D\) be the minimum one. By the choice of $M$ every edge of $F$ has at least one end in $V(M)$. It follows that $|D| \leq |V(M)|=2|M|$. By the third condition in the definition of \(\eps\)-disturbed subgraph,
$|F| \leq \eps n |D|\leq 2\delta \eps n^2.$

Let \(F' := E(G')\setminus E(G)\).  For an edge $e \in M$, let $T(e)$ be the set of edges $e' \in F'$, such that the only vertex of $V(M)$ that $e'$ is incident to, is an end of $e.$ Let $u,v$ be the ends of $e$. By the \(c\)-maximality of \(G'\), we have $|N_{G'}(u) \cap N_{G'}(v)| \geq cn$ for every pair of vertices $u,v \in V(G')$ non-adjacent in $G'$. Since \(e \in E(G)\) and $G$ is triangle-free, for every vertex $w \in  N_{G'}(u) \cap N_{G'}(v)$, either \(uw \in F'\) or \(vw \in F'\). It follows that $|T_e| \geq |N_{G'}(u) \cap N_{G'}(v)| - |V(M)| \geq (c -2 \delta)n$. Thus $|F'| \geq  (c-2\delta)n \cdot \delta n$.

Let \(\textbf{s}\) be a \(c\)-uniform sparse half in the graph \(G'\). Then
\begin{align*}
\E{\textbf{s}(E(G)) - \textbf{s}(E(G'))} &= \E{\textbf{s}(F) - \textbf{s}(F')} \\
&= \E{\textbf{s}(F)} - \E{\textbf{s}(F')},
\end{align*}
by linearity of the expectation. We have
\[\E{\textbf{s}(F')}  = \sum_ {e\in F'}{\E{\textbf{s}(e)}}  \geq \sum_ {e\in F'}{c\:\omega(e)}  =  c \sum_ {e\in F'}{\frac{1}{n^2}}  \geq  \frac{c| F'| }{n^2}.
\]
On the other hand,
\[\E{\textbf{s}(F)}  = \sum_ {e\in F}{\E{\textbf{s}(e)}}  \leq \sum_ {e\in F}{ \omega(e)}  = \frac{|F|}{n^2}.
\]
Finally, note that \(\delta\leq \eps ,\) since every edge in \(M\) has at least one of its ends in \(D\).
Hence,
\begin{align*}
\E{\textbf{s}(E(G)) - \textbf{s}(E(G'))} &\leq  \frac{|F|}{n^2} -  \frac{c| F'|}{n^2} \leq 2\delta \eps  -  c (c-2\delta)\delta \\
&= \delta(2\delta c + 2\eps-c^2) \leq 0,
\end{align*}
where the last inequality holds, as $2\delta c + 2\eps-c^2 \leq 2(1+c)\eps -c^2=0$.
Therefore,
$\E{\textbf{s}(E(G))} \leq \E{\textbf{s}(E(G'))} \leq \frac{1}{50},$
and the graph \(G\) has a sparse half by Lemma~\ref{auxlem1}.
\end{proof}

We are now ready to prove the main result of this section.

\begin{thm}\label{t:CloseMain} Let $H$ be an entwined maximal triangle-free graph. Suppose that there exists $\alpha>0$ such that, if $(H^{*},\omega)$ is $\alpha$-balanced, then $(H^{*},\omega)$ has an $\alpha$-uniform sparse half. Then there exists $\delta>0$ such that every triangle-free graph $G$ which can be $\delta$-approximated by $H$ has a sparse half.
\end{thm}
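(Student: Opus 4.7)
The plan is to chain together the three preceding results of this section. Given $H$ and $\alpha$ as in the hypothesis, I choose a single parameter $\varepsilon>0$ depending only on $\alpha$ and $\pl{v}(H)$, and let $\delta>0$ be the constant that Theorem~\ref{t:DisturbedGeneral} produces for this $\varepsilon$. For any triangle-free $G$ on $n$ vertices that is $\delta$-approximated by $H$, Theorem~\ref{t:DisturbedGeneral} then yields an auxiliary graph $G'$ and a strong homomorphism $\varphi\colon G'\to H^{*}$ such that $G$ is an $\varepsilon$-disturbed subgraph of $G'$ and $(H^{*},\omega_\varphi)$ is $\varepsilon$-balanced.

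Demanding $\varepsilon\le\alpha$ makes $(H^{*},\omega_\varphi)$ also $\alpha$-balanced, so by the hypothesis of the theorem it carries an $\alpha$-uniform sparse half $\mathbf{s}_H$. I pull $\mathbf{s}_H$ back to a distribution on halves of the uniformly weighted graph $(G',\omega_u)$ in the spirit of Lemma~\ref{auxlem2}: for each realization $\pl{s}_H$ of $\mathbf{s}_H$ define a half $\pl{s}_{G'}$ of $(G',\omega_u)$ by
\[
\pl{s}_{G'}(v)\,:=\,\frac{\pl{s}_H(\varphi(v))}{|\varphi^{-1}(\varphi(v))|}\qquad\text{for every }v\in V(G').
\]
Because $\varphi$ is a \emph{strong} homomorphism, the preimage in $G'$ of each edge $ij\in E(H^{*})$ is exactly the complete bipartite graph between $\varphi^{-1}(i)$ and $\varphi^{-1}(j)$; a direct computation then gives $\E{\pl{s}_{G'}(e)}\ge\alpha\,\omega_u(e)$ for every $e\in E(G')$ and $\E{\pl{s}_{G'}(E(G'))}=\E{\pl{s}_H(E(H^{*}))}\le 1/50$. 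Hence the induced distribution is an $\alpha$-uniform sparse half of $G'$.

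The main technical point --- and the step I expect to require the most care --- is to translate the weighted maximality supplied by Lemma~\ref{l:EntwinedToCMaximal} into the unweighted $c$-maximal triangle-freeness of $G'$ that Theorem~\ref{cuniformtouniform} demands. For non-adjacent $u,v\in V(G')$ the strong homomorphism gives
\[
|N_{G'}(u)\cap N_{G'}(v)|\,=\,n\cdot\omega_\varphi(S),
\]
where $S=N_{H^{*}}(\varphi(u))\cap N_{H^{*}}(\varphi(v))$ when $\varphi(u)\neq\varphi(v)$, and $S=N_{H^{*}}(\varphi(u))$ otherwise. In the first case Lemma~\ref{l:EntwinedToCMaximal} directly bounds $\omega_\varphi(S)$ below by $1/\pl{v}(H)-\varepsilon$. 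In the second case I observe that $S$ contains at least one vertex of $V(H)$: when $\varphi(u)\in V(H)$ this follows from the fact that $H$ is maximal triangle-free and therefore has no isolated vertex (assuming $\pl{v}(H)\ge 2$), and when $\varphi(u)\in\mathcal{I}^{*}(H)$ it follows because $N_{H^{*}}(\varphi(u))=\varphi(u)$ is a non-empty independent set of $H$. Combined with $\varepsilon$-balance, either way $\omega_\varphi(S)\ge 1/\pl{v}(H)-\varepsilon$, so $G'$ is $(1/\pl{v}(H)-\varepsilon)$-maximal triangle-free in the unweighted sense.

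To conclude, set $c:=\min\bigl(\alpha,\,1/\pl{v}(H)-\varepsilon\bigr)$. For $\varepsilon<1/(2\pl{v}(H))$, $c$ is bounded below by $c_0:=\min\bigl(\alpha,\,1/(2\pl{v}(H))\bigr)$, a positive constant depending only on $\alpha$ and $\pl{v}(H)$, so I refine the initial choice of $\varepsilon$ so that in addition $\varepsilon\le c_0^{2}/\bigl(2(1+c_0)\bigr)$. With these choices $G'$ is $c$-maximal triangle-free, the distribution above is also a $c$-uniform sparse half of $G'$ (since $\alpha$-uniformity is stronger than $c$-uniformity whenever $c\le\alpha$), and $G$ is a $c^{2}/(2(1+c))$-disturbed subgraph of $G'$. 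Theorem~\ref{cuniformtouniform} then delivers the desired sparse half of $G$.
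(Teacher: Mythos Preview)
Your argument is correct and follows essentially the same route as the paper's proof: apply Theorem~\ref{t:DisturbedGeneral} to obtain $G'$ and $\varphi$, lift the $\alpha$-uniform sparse half from $(H^{*},\omega_\varphi)$ to $G'$, deduce $c$-maximal triangle-freeness of $G'$ via Lemma~\ref{l:EntwinedToCMaximal} and the strong homomorphism, then finish with Theorem~\ref{cuniformtouniform}. You are simply more explicit than the paper at two points --- the pullback construction for the uniform sparse half, and the case $\varphi(u)=\varphi(v)$ in the maximality argument --- both of which the paper leaves implicit in one-line assertions; your parameter bookkeeping is a harmless variant of the paper's choice $\eps=\min\bigl(1/(3\pl{v}(H)^2),\,\alpha^2/(2(1+\alpha))\bigr)$.
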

\begin{proof} Let $\eps:=\min\left(\frac{1}{3\pl{v}(H)^2},\frac{\alpha^2}{2(1+\alpha)}\right)$, and let $\delta$ be chosen so that there exist $\varphi$ and $G'$ satisfying the conclusion of Theorem~\ref{t:DisturbedGeneral}. The weighted graph $(H^{*},\omega_{\varphi})$ has an $\alpha$-uniform sparse half, as $\eps \leq \alpha$. Therefore, the graph $G'$ has an $\alpha$-uniform sparse half.
By Lemma~\ref{l:EntwinedToCMaximal}, the graph $(H^{*},\omega_{\varphi})$ is $(1/\pl{v}(H)~-~\eps)$-triangle-free. Therefore, the graph $G'$ is $\eps$-triangle-free, because $\varphi$ is a strong homomorphism.  Let $c:=\min(\alpha,1/2\pl{v}(H))$. Then  $\eps \leq c^2/(2(1+c))$ and $c \leq 1/\pl{v}(H)-\eps$, by the choice of $\eps$. It follows that the conditions of Theorem~\ref{cuniformtouniform} are satisfied for $G'$ and $G$. Thus $G$ has a sparse half, as desired.
\end{proof}

\section{ Triangle-free graphs with at least \((1/5-\gamma)n^2\) edges}\label{sec:average}

In order to establish the conjecture for the triangle-free graphs with average degree \(\left(\frac{2}{5}-\gamma\right)n\) we separate the cases when the graphs under consideration are close in the sense of Definition~\ref{d:epsClose} to  the blowup of $C_5$ and when they are not. In the second case, we use the main result of Keevash and Sudakov from \cite{Sudakov}. Note that this statement follows from their proof method but it is not explicitly stated as such in~\cite{Sudakov}.

\begin{thm}
\label{theoremsudakov}
Let \(G\) be a triangle-free graph  on \(n\) vertices such that one of the following conditions holds
\begin{itemize}
\item[(a)] either \(\frac{1}{n}\sum_{v\in V(G)}{d^2(v)} \geq {\left(\frac{2}{5}n\right)}^2\) and \(\Delta(G) < \left (\frac{2}{5} + \frac{1}{135}\right)n\), or
\item[(b)] \(\Delta(G) \geq  \left(\frac{2}{5} + \frac{1}{135}\right)n\) and  \(\frac{1}{n}\sum_{v\in V(G)}{d(v)} \geq \left(\frac{2}{5} -\frac{1}{125} \right)n\).
\end{itemize}
Then $G$ has a sparse half.
\end{thm}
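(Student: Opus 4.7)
My plan is to follow the proof scheme of Keevash and Sudakov~\cite{Sudakov}, carefully tracking which hypothesis is invoked at each step. Their proof (establishing the sparse half conjecture under the stronger hypothesis that $G$ has average degree at least $2n/5$) constructs a candidate sparse half of the form $S:=N(v)\cup T$ for a vertex $v\in V(G)$ and an auxiliary set $T\subseteq V(G)\setminus N(v)$ of size $\lfloor n/2\rfloor-d(v)$. Since $G$ is triangle-free, $N(v)$ is independent, so $\pl{e}(G[S])$ depends only on $\sum_{u\in N(v)}d(u)$ and on $\pl{e}(G[V\setminus N(v)])=\pl{e}(G)-\sum_{u\in N(v)}d(u)$, both of which are controlled by the degree sequence of $G$ together with the triangle-free condition. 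Their argument splits naturally according to whether $\Delta(G)$ is above or below roughly $\frac{2}{5}n$, and hypotheses (a) and (b) are designed to match the two regimes.

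For case (b), I would take $v$ with $d(v)=\Delta(G)\geq\bigl(\frac{2}{5}+\frac{1}{135}\bigr)n$ and let $T$ be a uniformly random $\bigl(\lfloor n/2\rfloor-\Delta(G)\bigr)$-subset of $V(G)\setminus N(v)$. Because $|T|$ is then small, the quadratic contribution to $\E{\pl{e}(G[S])}$ coming from edges inside $T$ is negligible, while the first-order contribution from edges between $N(v)$ and $T$ is bounded in terms of $2\pl{e}(G)$, which by hypothesis is at least $\bigl(\frac{2}{5}-\frac{1}{125}\bigr)n^2$. The slack between $\frac{1}{135}$ and $\frac{1}{125}$ is precisely what makes $\E{\pl{e}(G[S])}\leq n^2/50$ close, so some realization of $T$ witnesses a sparse half. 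For case (a), no single vertex has a large enough degree for this estimate to work, so I would instead average the same construction over $v$ chosen uniformly in $V(G)$. The identity $\frac{1}{n}\sum_v\sum_{u\in N(v)}d(u)=\frac{1}{n}\sum_u d(u)^2$ lets the second-moment hypothesis $\frac{1}{n}\sum_v d(v)^2\geq\bigl(\frac{2n}{5}\bigr)^2$ play the role that the average degree bound played in case (b), while the upper bound $\Delta(G)<\bigl(\frac{2}{5}+\frac{1}{135}\bigr)n$ keeps the dependence of $|T|$ on $d(v)$ concentrated enough that the averaging is valid (it controls the variance arising from the random choice of $v$).

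The main obstacle will be calibrating the explicit constants $\frac{1}{135}$ and $\frac{1}{125}$, which are not the kind of numbers one guesses a priori: they arise as the numerical output of the optimization inside the Keevash--Sudakov argument once the input hypothesis is weakened as in (a) or (b). Matching them therefore requires rerunning that optimization and verifying that at each inequality in~\cite{Sudakov} the slack still closes, rather than introducing a new idea. Concretely, beyond fixing the case split and averaging scheme above, I would unpack the relevant estimates from~\cite{Sudakov} line by line, replace their use of the uniform lower bound on average degree with (a) or (b) as appropriate, and check that no strictly stronger hypothesis is silently used.
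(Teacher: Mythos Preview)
The paper does not give its own proof of this theorem: it is stated with the remark that ``this statement follows from their proof method but it is not explicitly stated as such in~\cite{Sudakov}''. Your proposal---to rerun the Keevash--Sudakov argument, separating the two regimes by the value of $\Delta(G)$ and checking at each step which of the hypotheses (a) or (b) suffices---is therefore exactly in line with what the paper intends, and in fact supplies more detail than the paper itself.
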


The next theorem represents the main technical step in the proof of Theorem~\ref{maintheorem2}.

\begin{thm}
\label{maintheorem}
For every $\eps > 0$ there exists $\delta>0$ such that the following holds.  If \(G\) is a triangle-free graph with $\pl{v}(G)=n$ and \(\pl{e}(G) \geq (\frac{1}{5}-\delta)n^2 \) then either
\begin{itemize}
\item [(i)] \(G\) can be \(\eps\)-approximated by $C_5$,
\item [(ii)] or at least \(\delta n\) vertices of \(G\) have degree at least \(\left(\frac{2}{5}+\delta\right) n\),
\item [(iii)] or there exists an \(F\subseteq E(G)\) with \(|F|\leq \eps n^2\) such that the graph $G - F$ is bipartite.
\end{itemize}
\end{thm}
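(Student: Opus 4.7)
The plan is to prove the contrapositive: assume $G$ is triangle-free with $\pl{e}(G) \geq (1/5-\delta)n^2$ and that both (ii) and (iii) fail, and deduce (i). Choose parameters in the hierarchy $\delta \ll \eta \ll \eps$, where $\eta$ will be a fixed fraction of $\eps$ and $\delta$ polynomially small in $\eta$.

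\emph{Step 1 (degree concentration).} Let $V_h := \{v : d_G(v) > (2/5+\delta)n\}$ and $V_l := \{v : d_G(v) < (2/5-\eta/4)n\}$. Failure of (ii) gives $|V_h| < \delta n$, while the combined bounds $d(v) < (2/5-\eta/4)n$ on $V_l$, $d(v) \leq (2/5+\delta)n$ on $V\setminus(V_h\cup V_l)$, $d(v) \leq n$ on $V_h$, and $\sum_v d(v) = 2\pl{e}(G) \geq (2/5-2\delta)n^2$, yield $|V_l| = O(\delta/\eta)\cdot n$ after a routine averaging calculation. Setting $T := V(G)\setminus V_l$, the induced subgraph $G[T]$ is triangle-free and satisfies $\delta(G[T]) \geq (2/5-\eta/4)n - |V_l| \geq (2/5-\eta/2)\,\pl{v}(G[T])$ as long as $\delta$ is small enough in terms of $\eta$.

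\emph{Step 2 (Andrasfai-Erd\H{o}s-Sos stability).} The key technical input I would prove separately is the following: for every $\eps'>0$ there exists $\eta'>0$ such that every triangle-free graph $H$ on $m$ vertices with $\delta(H) \geq (2/5-\eta')m$ is either bipartite or $\eps'$-approximated by $C_5$. To prove it, choose $\eta'<1/40$ so that $\delta(H) > (3/8)m$; H\"aggkvist's theorem quoted in Section 2 then provides a homomorphism $\varphi\colon H \to C_5$. If $\varphi$ is not surjective, its image lies in a path $P_4 \subseteq C_5$ and hence $H$ is bipartite. Otherwise, setting $V_i := \varphi^{-1}(i)$, the inclusion $N(v) \subseteq V_{i-1}\cup V_{i+1}$ for $v\in V_i$ forces $|V_{i-1}|+|V_{i+1}| \geq (2/5-\eta')m$ for every $i$ (indices mod $5$). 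Summing this for $i=2$ and $i=3$ yields $|V_1|+|V_2|+|V_3|+|V_4| \geq (4/5-2\eta')m$, whence $|V_5| \leq (1/5+2\eta')m$; cyclically $|V_i| \leq (1/5+2\eta')m$ for all $i$, and $\sum_i|V_i| = m$ then also gives $|V_i| \geq (1/5-8\eta')m$. The inequalities $\pl{e}(H) = \sum_i \pl{e}(V_i,V_{i+1}) \geq (1/5-\eta'/2)m^2$ and $\sum_i |V_i||V_{i+1}| \leq (1/5+O(\eta'))m^2$ together show that the $C_5$-blowup on $(V_1,\dots,V_5)$ differs from $H$ in at most $O(\eta')m^2$ edges; rebalancing the class sizes by reassigning $O(\eta')m$ vertices adds $O(\eta')m^2$ further discrepancies, producing an $O(\eta')$-approximation of $H$ by $C_5$.

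\emph{Step 3 (conclusion and main obstacle).} Apply the stability lemma to $G[T]$ with $\eps'=\eps/2$, choosing $\eta$ so that $\eta/2 \leq \eta'(\eps/2)$. If $G[T]$ is bipartite, let $F$ be the set of edges of $G$ incident to $V_l$; then $|F| \leq |V_l|\cdot n = O(\delta/\eta)n^2 \leq \eps n^2$ and $G-F$ is bipartite, contradicting the failure of (iii). Otherwise $G[T]$ is $(\eps/2)$-approximated by $C_5$ via some partition $T = T_1\cup\cdots\cup T_5$, and adjoining $V_l$ to $T_1$ shifts each class size by at most $|V_l| \leq \eps n/2$ and adds at most $|V_l|\cdot n \leq (\eps/2)n^2$ to the edge discrepancy, yielding an $\eps$-approximation of $G$ by $C_5$, establishing (i). The main obstacle is the stability lemma in Step 2: H\"aggkvist's theorem readily provides the homomorphism, but the balance of the class sizes and the near-completeness of the bipartite pairs must be extracted from the degree and edge counts, and the "bipartite or close to $C_5$-blowup" dichotomy in the conclusion of Theorem~\ref{maintheorem} emerges precisely from the surjectivity alternative for the homomorphism.
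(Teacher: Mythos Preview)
Your proposal is correct and follows essentially the same route as the paper's proof: remove the small set of low-degree vertices (your $V_l$, the paper's $S$), observe that the remainder has minimum degree above $3n/8$ and hence admits a homomorphism to $C_5$ (you cite H\"aggkvist, the paper cites Chen--Jin--Koh, which is equivalent here), and then split on surjectivity of that homomorphism to obtain either the bipartite outcome (iii) or the $C_5$-approximation outcome (i). Your class-size bounds in Step~2 are exactly the content of the paper's Lemma~\ref{upperlowerbounds}, and your edge-discrepancy count is the same computation the paper carries out inline; the only cosmetic differences are that you package Step~2 as a standalone stability lemma and that the ``rebalancing'' you mention is unnecessary under the paper's Definition~\ref{d:epsClose}, which only asks $||V_i|-n/5|\le\eps n$.
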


The proof of Theorem~\ref{maintheorem} uses the following technical lemmas.

\begin{lem}
\label{mindegreelemma}  For \(\delta>0\) let \(G\) be a  graph with $\pl{v}(G)=n$ and $\pl{e}(G) \geq (\frac{1}{5}-\delta)n^2$. Then either
\begin{itemize}
\item[(1)] at least \(\delta n\) vertices have degree at least \((\frac{2}{5} + \delta)n\),
\item[(2)] or at most \(2\sqrt{\delta} n \) vertices have degree at most \((\frac{2}{5}-2\sqrt{\delta})n\).
\end{itemize}
\end{lem}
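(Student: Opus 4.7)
The plan is a standard double-counting argument on vertex degrees, using the hypothesis $\pl{e}(G)\geq (\tfrac{1}{5}-\delta)n^2$ to lower bound the sum of degrees as $\sum_{v}d(v) = 2\pl{e}(G) \geq (\tfrac{2}{5}-2\delta)n^2$. I will show that if conclusion (1) fails, then (2) must hold, by deriving a contradiction from the negation of both.

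First I would set $A := \{v \in V(G) : d(v) \geq (\tfrac{2}{5}+\delta)n\}$ and $B := \{v \in V(G) : d(v) \leq (\tfrac{2}{5}-2\sqrt{\delta})n\}$. Note that $A$ and $B$ are disjoint for $\delta$ small enough (which is the only case of interest, since for $\delta$ large the statement is vacuous). Assuming (1) fails gives $|A| < \delta n$, and assuming (2) also fails gives $|B| > 2\sqrt{\delta} n$. Next I would upper-bound the sum of degrees by the trivial bound $d(v) \leq n$ on $A$, the definition of $B$ on $B$, and $d(v) < (\tfrac{2}{5}+\delta)n$ on the rest:
\begin{equation*}
\sum_v d(v) \;\leq\; |A|\cdot n \;+\; |B|\cdot \left(\tfrac{2}{5}-2\sqrt{\delta}\right)n \;+\; (n-|A|-|B|)\cdot \left(\tfrac{2}{5}+\delta\right)n.
\end{equation*}

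Setting $a := |A|/n$ and $b := |B|/n$ and dividing by $n^2$, this rearranges to
\begin{equation*}
\tfrac{2}{5}-2\delta \;\leq\; \tfrac{2}{5}+\delta \;+\; a\left(\tfrac{3}{5}-\delta\right) \;-\; b\left(2\sqrt{\delta}+\delta\right),
\end{equation*}
equivalently $b(2\sqrt{\delta}+\delta) \leq a(\tfrac{3}{5}-\delta) + 3\delta$. Substituting $a < \delta$ would give $b(2\sqrt{\delta}+\delta) < \delta(\tfrac{3}{5}-\delta)+3\delta < 4\delta$, hence $b < 4\delta/(2\sqrt{\delta}) = 2\sqrt{\delta}$, contradicting $b > 2\sqrt{\delta}$. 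This yields the desired dichotomy.

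There is no real obstacle here; the only care needed is the choice of the exponent $\sqrt{\delta}$ in the threshold, which is dictated by the arithmetic: the slack from the edge count is $O(\delta)$, and dividing by the degree gap $\Theta(\sqrt{\delta})$ produces the $\Theta(\sqrt{\delta})$ bound on $|B|/n$. One should also briefly remark that the claim is trivial when $\delta$ is not small (e.g.\ $\delta \geq 1/9$), so the implicit assumption $(2/5-2\sqrt{\delta})n < (2/5+\delta)n$ used to conclude $A \cap B = \emptyset$ is harmless.
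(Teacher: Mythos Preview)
Your argument is correct and is essentially the same double-counting as the paper's: assume (1) fails, upper-bound $\sum_v d(v)$ by splitting vertices into low/middle/high degree ranges, and solve for the fraction of low-degree vertices. The only cosmetic difference is that the paper bounds the high-degree contribution by $\delta n \cdot n$ directly (adding a $+\delta n^2$ term) rather than carrying $a=|A|/n$ through the algebra; both routes land on $t < 4\delta/(2\sqrt{\delta}+\delta) < 2\sqrt{\delta}$. One small remark: your caveat about $A\cap B=\emptyset$ is unnecessary, since $(\tfrac{2}{5}-2\sqrt{\delta}) < (\tfrac{2}{5}+\delta)$ holds for every $\delta>0$, so $A$ and $B$ are always disjoint.
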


\begin{proof}
Suppose that the outcome (1) does not hold. Let \(tn\) be the number of vertices that have degree at most \((\frac{2}{5}-2\sqrt{\delta})n\). Then
\begin{align*}
2\left(\frac{1}{5}-\delta\right)n^2 &\leq 2\pl{e}(G) = \sum_{v\in V}{d(v)} \\
&=
\sum_{\substack{v\in V \\ d(v) \leq (\frac{2}{5}-2\sqrt{\delta})n} }{d(v)}
+\sum_{\substack{v\in V \\ (\frac{2}{5}-2\sqrt{\delta})n < d(v) < (\frac{2}{5}+\delta)n} }{d(v)} +
\sum_{\substack{v\in V \\ d(v) \geq (\frac{2}{5}+\delta)n} }{d(v)}
\\&< tn\left(\frac{2}{5}-2\sqrt{\delta}\right)n + \left(1-t\right)n\left(\frac{2}{5}+\delta\right)n + \delta n^2\\
&=\left(\frac{2}{5} - \sqrt{\delta}t +2\delta -t\delta\right)n^2.
\end{align*}
Thus $-2\delta < 2\delta -2t\sqrt{\delta} - \delta t,$
and
\[t < \frac{4\delta}{2\sqrt{\delta} +\delta} < 2\sqrt{\delta},\]
as desired.
\end{proof}

\begin{lem}
\label{upperlowerbounds}  Let $H$ be a graph on $n$ vertices with minimum degree at least \(\left(\frac{2}{5}-\delta\right)n\). Let $\varphi$ be a surjective homomorphism from $H$ to $C_5$. Then
\begin{equation}
\left(\frac{1}{5}-3\delta\right)n\leq |\varphi^{-1}(v)| \leq \left(\frac{1}{5} +2\delta\right)n,
\end{equation}
for every $v \in V(C_5)$.
\end{lem}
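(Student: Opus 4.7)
The plan is to reduce the lemma to a small linear-programming exercise on the five fractions $a_i := |\varphi^{-1}(v_i)|/n$, where $V(C_5) = \{v_1,\ldots,v_5\}$ with the cyclic adjacency $v_i \sim v_{i+1}$. The starting observation is that, since $\varphi$ is a homomorphism from $H$ into $C_5$, every vertex $u \in \varphi^{-1}(v_i)$ has all its $H$-neighbors contained in $\varphi^{-1}(v_{i-1}) \cup \varphi^{-1}(v_{i+1})$ (indices mod $5$). Combined with the hypothesis $\delta(H) \geq (\tfrac{2}{5}-\delta)n$, this yields the five ``distance-two'' inequalities
\[
a_{i-1}+a_{i+1}\;\geq\;\tfrac{2}{5}-\delta \qquad (i=1,\ldots,5),
\]
which together with the normalization $\sum_{i=1}^{5} a_i = 1$ are the only hypotheses I will use.

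To extract the upper bound on $a_j$, I would sum the two inequalities indexed by $i=j-2$ and $i=j+2$; their left-hand sides involve exactly the four indices $\{j-2,j-1,j+1,j+2\}=\{1,\ldots,5\}\setminus\{j\}$, so the normalization immediately yields $a_j \leq \tfrac{1}{5}+2\delta$. For the lower bound, I would then feed this upper bound back into one of the original inequalities, namely $a_{j-2}+a_j \geq \tfrac{2}{5}-\delta$, to conclude
\[
a_j \;\geq\; \tfrac{2}{5}-\delta - \left(\tfrac{1}{5}+2\delta\right) \;=\; \tfrac{1}{5}-3\delta.
\]
Multiplying through by $n$ gives the claim.

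There is no serious obstacle; the only point that requires any care is choosing the correct pair of defining inequalities to sum for the upper bound so that the four distance-two indices on the left-hand side precisely complement $j$ in $\{1,\ldots,5\}$, which is what lets $\sum_i a_i = 1$ pin $a_j$ down without any slack larger than $2\delta$. Note also that surjectivity of $\varphi$, although listed as a hypothesis, is not strictly needed for the bounds themselves (a missing $V_i$ corresponds to $a_i = 0$, which simply violates the lower bound if $\delta < 1/15$); it is implicit in the setting where the lemma will later be applied.
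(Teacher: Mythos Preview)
Your proof is correct and follows essentially the same approach as the paper: both reduce to the five linear inequalities on the preimage sizes coming from the minimum-degree condition, and combine two of them with the normalization $\sum a_i=1$ to get the upper bound. The only cosmetic differences are your choice of the standard adjacency labeling of $C_5$ (the paper uses $v_i\sim v_{i+2},v_{i+3}$, so its basic inequality reads $|V_j|+|V_{j+1}|\ge(\tfrac{2}{5}-\delta)n$), and that for the lower bound the paper sums three of the inequalities directly to obtain $n+|V_i|\ge 3(\tfrac{2}{5}-\delta)n$, whereas you bootstrap from the upper bound already established; these yield the same constant $\tfrac{1}{5}-3\delta$.
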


\begin{proof} Let the vertices of $C_5$ be labelled $v_1,v_2,\ldots, v_5$ such that the neighbors of \(v_i\) are \(v_{i+2}\) and \(v_{i+3}\). Define $V_i :=\varphi^{-1}(v_i)$ for $i=1,2,\ldots,5$. By the minimum degree condition on \(H\), we have
\begin{align}
\label{ineqvi}
|V_i| + |V_{i+1}| \geq  \left(\frac{2}{5}-\delta\right)n \\
|V_{i+2}| + |V_{i+3}| \geq  \left(\frac{2}{5}-\delta\right)n \\
\label{ineqviii}
|V_{i+4}| + |V_i| \geq  \left(\frac{2}{5}-\delta\right)n,
\end{align}
therefore
\[ |V_i| + \left(\frac{2}{5}-\delta\right)n + \left(\frac{2}{5}-\delta\right)n \leq |V_i| +  (|V_{i+1}| +  |V_{i+2}|) + (|V_{i+3}| +  |V_{i+4}|) \leq n, \]
which gives us the desired upper bound.

For the lower bound summing inequalities  (\ref{ineqvi})-(\ref{ineqviii})
we get
\[ n + |V_i| \geq \sum_{j=1}^5{|V_j|} + |V_i| \geq
3\left(\frac{2}{5}-\delta\right)n,\]
which gives us $|V_i|\geq \left(\frac{1}{5} -3\delta\right)n$, as desired.
\end{proof}

\begin{proof}[Proof of Theorem~\ref{maintheorem}] We show that $\delta := (\eps/40)^2$ satisfies the theorem for $\eps \leq 1$.  We apply Lemma~\ref{mindegreelemma}. The first outcome of Lemma~\ref{mindegreelemma} corresponds to the outcome (ii) of the theorem. Therefore we assume that the second outcome holds: There exists \(|S|\leq \frac{\eps}{20}n\) such that every vertex in $V(G)\setminus S$ has degree at least \(\left(\frac{2}{5}-\frac{\eps}{20}\right)n\) in $G$. It follows that $G':=G - S$ has the minimum degree at least \(\left(\frac{2}{5}-\frac{\eps}{10}\right)n\).

Since $\left(\frac{2}{5}-\frac{\eps}{10}\right)n \geq \frac{3}{8}|V(G')|$, Theorem~\ref{chenjinkohoriginal} implies that there exists a homomorphism $\varphi$ from $G'$ to $C_5$. Let \(V(C_5) = \{v_1, v_2, \dots, v_5\}\) and  \(V_i= {\varphi}^{-1}(v_i)\) for each \(i=1, 2,\dots,5\).

If the homomorphism \(\varphi\) is not surjective then by Lemma~\ref{homomorphism} the graph \(G'\) is bipartite and therefore the graph \(G^{*}= (V(G), E(G'))\) is also bipartite. We have $\pl{e}(G)-\pl{e}(G') \leq |S|n \leq \eps n^2.$ Thus outcome (iii) holds. Hence we can suppose that the homomorphism \(\varphi\) is surjective.  Applying Lemma~\ref{upperlowerbounds} to the graph \(G'\) with \(\delta = \frac{\eps}{10}\), we get that
\begin{equation}
\label{lowerupperbound}
\left(\frac{1}{5}-\frac{2\eps}{5}\right)n\leq |V_i| \leq \left(\frac{1}{5} +\frac{\eps}{5}\right)n.
\end{equation}
Let $\mc{V}:=(V_1\cup S,V_2,\ldots,V_5)$ be a partition of $V(G)$. From (\ref{lowerupperbound}) we have $\left||V_i|-\frac{n}{5}\right| \leq \eps n$.
Let $H_{\mc{V}}$ be as in Definition~\ref{d:epsClose}.  Then
\begin{align*}
\pl{e}(G \triangle H_{\mc{V}}) \leq |S|n + \pl{e}(H_{\mc{V}})-\pl{e}(G') \leq \frac{\eps}{20}n^2+\frac{n^2}{5}-\frac{1}{2}\left(\frac{2}{5}-\frac{\eps}{10}\right)&\left( 1-\frac{\eps}{20}\right)n^2\\
&\leq \eps n^2.
\end{align*}
Thus outcome (i) holds.
\end{proof}

If outcome (i) of Theorem~\ref{maintheorem} holds, our goal is to apply Theorem~\ref{t:CloseMain}. To do that we need to ensure that a $c$-balanced weighting of $C_5$  has a $c$-uniform sparse half for some $c>0$.

\begin{thm}
\label{uniformtheorem}
Any $(1/50)$-balanced weighted graph  $(C_5,\omega)$ has a $(1/30)$-uniform sparse half.
\end{thm}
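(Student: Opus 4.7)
The plan is to recycle the five halves $\textbf{s}_1,\ldots,\textbf{s}_5$ built in the case $d=2$ of the proof of Theorem~\ref{weightedthm} and take $\textbf{s}$ to be the uniform distribution on them. I would first unpack the hypothesis: since $C_5 = F_2$ has $\mathcal{I}^{*}(C_5) = \emptyset$ by Fact~\ref{fdgraphs}, we have $C_5^{*} = C_5$, so $(1/50)$-balancedness reads simply $\omega(v_i) \in [9/50,\,11/50]$ for each $i$. In particular $\omega(v_i) + \omega(v_{i+1}) \leq 22/50 < 1/2$, which puts us in the non-trivial branch of the $d=2$ argument, where the function
\[
\textbf{s}_i(v_i) = \omega(v_i),\quad \textbf{s}_i(v_{i+1}) = \omega(v_{i+1}),\quad \textbf{s}_i(v_{i+2}) = \tfrac{1}{2} - \omega(v_i) - \omega(v_{i+1}),
\]
(and zero elsewhere) is well defined. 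Balancedness immediately yields $\textbf{s}_i(v_{i+2}) \in [3/50,\,7/50] \subseteq [0,\,\omega(v_{i+2})]$, so each $\textbf{s}_i$ is a genuine half of $(C_5,\omega)$.

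Next, I would verify the edge-weight condition with no new work. The Jensen-inequality calculation already performed in the proof of Theorem~\ref{weightedthm} shows $\sum_{i=1}^{5}\textbf{s}_i(E(C_5)) \leq 1/10$, so under the uniform mixture $\mathbb{E}[\textbf{s}(E(C_5))] \leq 1/50$, which is the second defining property of a $c$-uniform sparse half.

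The only genuinely new ingredient is the per-edge lower bound. Here I would exploit that the support $\{v_i,v_{i+1},v_{i+2}\}$ of $\textbf{s}_i$ contains only the single edge $v_iv_{i+2}$ of $C_5$, because $v_iv_{i+1}$ and $v_{i+1}v_{i+2}$ are non-adjacent pairs in $F_2$. Consequently, each edge $v_jv_{j+2}$ is supported by exactly one half (namely $\textbf{s}_j$), so the ratio to check is
\[
\frac{\mathbb{E}[\textbf{s}(v_jv_{j+2})]}{\omega(v_jv_{j+2})} = \frac{1}{5}\cdot \frac{\tfrac{1}{2}-\omega(v_j)-\omega(v_{j+1})}{\omega(v_{j+2})} \geq \frac{1}{5}\cdot \frac{3/50}{11/50} = \frac{3}{55} > \frac{1}{30},
\]
using $\omega(v_j)+\omega(v_{j+1}) \leq 22/50$ in the numerator and $\omega(v_{j+2}) \leq 11/50$ in the denominator.

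There is no serious obstacle: the theorem is a quantitative refinement of the already-established $d=2$ case of Theorem~\ref{weightedthm}, and all the slack comes from the $(1/50)$-balanced hypothesis pinning each $\omega(v_i)$ close to $1/5$. The only mildly delicate point is the per-edge ratio computation, where one must combine the numerator lower bound $3/50$ and the denominator upper bound $11/50$ to land comfortably above $1/30$; the constant $1/30$ in the statement is simply a convenient round number below $3/55$.
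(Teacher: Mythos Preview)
Your proposal is correct and follows essentially the same approach as the paper: the uniform distribution on the five halves $\pl{s}_1,\ldots,\pl{s}_5$ from the $d=2$ case of Theorem~\ref{weightedthm}, with the Jensen argument there giving $\E{\textbf{s}(E(C_5))}\leq 1/50$. Your per-edge computation is in fact slightly cleaner than the paper's, since you cancel the common factor $\omega(v_j)$ before bounding and obtain the sharper ratio $3/55$ (the paper bounds numerator and denominator separately to get $27/605$), but both comfortably exceed $1/30$.
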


\begin{proof} Let $\delta:=1/50$, \(V(C_5) = \{v_1,v_2,\dots, v_5\}\) and $E(C_5)=\{v_iv_{i+2}\}_{i=1}^5$, as in the proof of Theorem~\ref{weightedthm}. We define a distribution on the set of halves of the graph \(C_5\). Recall the  halves  \(\pl{s}_i\), \(1 \leq i \leq 5\) that we have defined earlier in the proof of the Theorem~\ref{weightedthm}.

Let the probability mass of the distribution \(\textbf{s}\) be \(\frac{1}{5}\) on every \(\pl{s}_i\), \(i =1,2,\dots, 5\). We show that \(\textbf{s}\) is a \(\frac{1}{30}\)-uniform sparse half. Let us begin by showing that $\E{\textbf{s}(e)}) \geq \frac{1}{30}\:{\omega(e)}$ for every $e \in E(C_5)$. Let \(e= (v_i,v_{i+2})\), then
\[\E{\textbf{s}(e)} = \frac{1}{5} \cdot \omega(v_i)\left(\frac{1}{2}-(\omega(v_i) + \omega(v_{i+1}))\right). \]
Hence,
\[\E{\textbf{s}(e)} \geq \frac{1}{5} \left(\frac{1}{5}-\delta\right)\left(\frac{1}{2}-2 \left(\frac{1}{5} +\delta\right)\right) = \frac{1}{5} \cdot  \left(\frac{1}{5}-\delta\right)\left(\frac{1}{10}-2\delta\right). \]
On the other hand,
\[\omega(e)= \omega(v_i)\cdot \omega(v_{i+2}) \leq \left(\frac{1}{5} +\delta\right)^2.\]
Thus it suffices to show that
$$\frac{1}{5} \left(\frac{1}{5}-\delta\right)\left(\frac{1}{10}-2\delta\right) \geq \frac{1}{30}\left(\frac{1}{5} +\delta\right)^2,$$
which can be easily verified. It is shown in the proof of Theorem~\ref{weightedthm} that $\E{\textbf{s}(E(G))} \leq \frac{1}{50}.$ Thus, \(\textbf{s}\) is a \(\frac{1}{30}\)-uniform sparse half of $G$, as claimed.
\end{proof}

We need a final technical lemma.

\begin{lem}\label{avgdegree} For every \(\delta >0\) there exists \(\gamma>0\) such that if \(G\) is a graph on $n$ vertices with at least  \(\left(\frac{1}{5}-\gamma\right)n^2\) edges and at least \(\delta n\) vertices of degree at least \((\frac{2}{5} + \delta)n\) then
 \[\frac{1}{n}\sum_{v\in V(G)}{d^2(v)} \geq {\left(\frac{2}{5}n\right)}^2.\]
\end{lem}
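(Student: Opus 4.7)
The natural approach is to reduce the claim to a variance computation after centering the degrees at $\mu := 2n/5$. Set $x_v := d(v) - \mu$ for each $v \in V(G)$. Expanding,
\[
\sum_{v \in V(G)} d^2(v) \;=\; n\mu^2 + 2\mu \sum_{v} x_v + \sum_{v} x_v^2,
\]
so the desired bound $\frac{1}{n}\sum_v d^2(v) \geq \mu^2$ is equivalent to
\[
\sum_{v} x_v^2 \;\geq\; -2\mu \sum_{v} x_v.
\]

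If $2e(G) \geq \mu n$, then $\sum_v x_v \geq 0$ and both sides of the above make the inequality trivial (in fact, Cauchy--Schwarz alone gives $\sum d^2(v) \geq (2e(G))^2/n \geq 4n^3/25$). The interesting case is when $2e(G) < \mu n$, in which the degree hypothesis must compensate for the edge deficit. Here I would bound
\[
-2\mu \sum_v x_v \;=\; 2\mu \bigl(\mu n - 2e(G)\bigr) \;\leq\; 2 \cdot \tfrac{2n}{5} \cdot 2\gamma n^2 \;=\; \tfrac{8\gamma}{5}\, n^3,
\]
using $2e(G) \geq (2/5 - 2\gamma)n^2$.

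For the lower bound on $\sum_v x_v^2$, I would simply discard all but the $\delta n$ high-degree vertices given by the hypothesis: each contributes $x_v^2 \geq (\delta n)^2$, hence
\[
\sum_v x_v^2 \;\geq\; \delta n \cdot (\delta n)^2 \;=\; \delta^3 n^3.
\]

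Combining the two estimates, it suffices to choose $\gamma$ so that $\delta^3 \geq \frac{8\gamma}{5}$; for instance $\gamma := \delta^3/2$ works. There is essentially no obstacle here, since the only real question is whether the variance contributed by the $\delta n$ vertices of excess degree is large enough to outweigh the deficit $\mu n - 2e(G)$, and this is a direct numerical comparison. The one thing to be careful about is handling the sign of $\sum_v x_v$ separately, as sketched above, so that we never divide by a quantity that might be zero or waste the high-degree contribution when the Cauchy--Schwarz bound is already sufficient.
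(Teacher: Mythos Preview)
Your argument is correct, and it is genuinely cleaner than the paper's. The paper centers at the actual mean rather than at the target value $\mu=2n/5$: it uses the identity $n\sum_v d^2(v) - \bigl(\sum_v d(v)\bigr)^2 = \tfrac{1}{2}\sum_{u\neq v}(d(u)-d(v))^2$ and then needs pairs of vertices with a degree gap. This forces an extra step---showing that if the average degree falls below $2n/5$ then at least $\tfrac{5}{2}\delta^2 n$ vertices have degree at most $2n/5$---so that the $\delta n$ high-degree vertices can be paired against them. The resulting bound is $\gamma = \tfrac{25}{16}\delta^5$. By centering directly at $2n/5$, you bypass the search for low-degree vertices entirely: the $\delta n$ high-degree vertices alone supply $\sum_v x_v^2 \geq \delta^3 n^3$, which already dominates the edge deficit $\tfrac{8\gamma}{5}n^3$, yielding the sharper $\gamma = \delta^3/2$. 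Your case split on the sign of $\sum_v x_v$ is also handled cleanly. Nothing is missing.
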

\begin{proof}
Suppose that $G$ contains \(\alpha n\) vertices of degree at most \(\frac{2}{5}n\) and \(\beta n\) vertices of degree at least \(\left(\frac{2}{5} +\delta\right)n\). We may assume that the average degree of $G$ is less than \(\frac{2}{5}n\), as otherwise lemma clearly holds.
Thus,
\[\frac{2}{5} n  >  \left(1-\alpha - \beta\right) \frac{2}{5}n+ \beta\left(\frac{2}{5}+\delta\right)n,\]
hence \(\alpha > \frac 52 \beta\delta \geq \frac 52 {\delta}^2.\) We have
\begin{align*}
n\sum_{v\in V(G)}{d^2(v)} - {\left(\sum_{v\in V(G)}{d(v)}\right)}^2 &= \frac{1}{2}\sum_{u\neq v}{{(d(u)-d(v))}^2} \\
&\geq \alpha \delta {\left(\frac{2}{5}+\delta - \frac{2}{5}\right)}^2n^4 > \frac 52{\delta}^5n^4.
\end{align*}
Hence
 \begin{align*}
 \frac{1}{n}\sum_{v\in V(G)}{d^2(v)} &\geq \frac{1}{n^2}{\left(\sum_{v\in V(G)}{d(v)}\right)}^2 + \frac 52 {\delta}^5n^2 \geq 4{\left(\frac{1}{5}-\gamma\right)}^2n^2 + \frac 52 {\delta}^5n^2\\
 &\geq \frac{4}{25}n^2 +\left( \frac 52{\delta}^5 -\frac{8}{5}\gamma \right)n^2 \geq \frac{4}{25}n^2,
 \end{align*}
 if we choose \(\gamma = \frac{25}{16}{\delta}^5\).
\end{proof}

\begin{proof}[Proof of Theorem~\ref{maintheorem2}] By Theorem~\ref{uniformtheorem}, $\alpha=1/50$ satisfies the conditions in the statement of Theorem~\ref{t:CloseMain} for $H:=C_5$. Thus by Theorem~\ref{t:CloseMain} there exists $0<\eps\leq 1/50$ such that every triangle-free graph $G$ that can be $\eps$-approximated by $C_5$ has a sparse half. Let $\delta$ be such that Theorem~\ref{maintheorem} holds, and finally let $\gamma$ be such that Lemma~\ref{avgdegree} holds. We show that Theorem~\ref{maintheorem2} holds for this choice of $\gamma$.

We distinguish cases based on the outcome of Theorem~\ref{maintheorem} applied to $G$.

\noindent\textbf{ Case (i):} If $G$ can be $\eps$-approximated by $C_5$ then the theorem holds by the choice of $\eps$.

\noindent\textbf{ Case (ii):} Now suppose that at least \(\delta n\) vertices of \(G\) have degree at least \(\left(\frac{2}{5}+\delta\right)n\). If \(\Delta(G) \geq \left(\frac{2}{5} + \frac{1}{135}\right)n \) then Theorem~\ref{theoremsudakov} (b) implies that there is a sparse half in \(G\). Therefore we  assume that \(\Delta(G) < \left(\frac{2}{5} + \frac{1}{135}\right)n \).
By Lemma~\ref{avgdegree} and the choice of $\gamma$ we have $\frac{1}{n}\sum_{v\in V(G)}{d^2(v)} \geq {\left(\frac{2}{5}n\right)}^2$. Hence Theorem~\ref{theoremsudakov} (a) implies that there is a sparse half in \(G\).

\noindent\textbf{ Case (iii):}  Lastly, suppose there exists an \(F\subseteq E(G)\) with \(|F| \leq \eps n^2\) such that the graph \(G'=(V(G), E(G)\setminus F)\) is bipartite with bipartition \((U,V)\).  Then either \(|U|\geq \frac{n}{2}\) or  \(|V|\geq \frac{n}{2}\). Without loss of generality suppose  \(|U|\geq \frac{n}{2}\) . The set \(U\) is independent in \(G'\), while in \(G\) it might not be, but
$\pl{e}(G[U]) \leq |F| \leq \eps n^2 \leq \frac{ n^2}{50}$. Hence \(U\) supports a sparse half in graph \(G\).
\end{proof}

\section{Neighbourhood of the Petersen Graph}\label{sec:petersen}

The uniform blowup of the Petersen graph \(P\), is an extremal example for Conjecture~\ref{mainconjecture}, that is every set of \(\lfloor n/2 \rfloor\) vertices spans at least \(n^2/50\) edges. Here we show that the Conjecture~\ref{mainconjecture} holds for any graph that is close to a uniform blowup of Petersen graph  in the sense of Definition~\ref{d:epsClose}. By Theorem~\ref{t:CloseMain} it is enough to show that sufficiently balanced blowups of $P^{*}$ (see Figure~\ref{f:P15}) have uniform sparse halves.

\begin{figure}[htbp]
\begin{center}
\begin{tikzpicture}[label distance=1mm]
    \tikzstyle{every node}=[draw,circle,fill=black,minimum size=15pt,
                            inner sep=0pt]
    \draw (0,0) node (v10){\textcolor{white}{\scriptsize$\boldsymbol{v_{10}}$}}
        [ultra thick]-- ++(0:\vdist) node (v7) {\textcolor{white}{\scriptsize$\boldsymbol{v_7}$}}
        -- ++(216:\vdist) node (v9){\textcolor{white}{\scriptsize$\boldsymbol{v_9}$}}
        -- ++(72:\vdist) node (v6) {\textcolor{white}{\scriptsize$\boldsymbol{v_6}$}}
        -- ++(288:\vdist) node (v8) {\textcolor{white}{\scriptsize$\boldsymbol{v_8}$}}
        -- ++(306:\udist) node (v3){\textcolor{white}{\scriptsize$\boldsymbol{v_3}$}};
	\path (v10) ++(162:\udist) node (v5){\textcolor{white}{\scriptsize$\boldsymbol{v_5}$}};
	\path (v7) ++(18:\udist) node (v2){\textcolor{white}{\scriptsize$\boldsymbol{v_2}$}};
	\path (v9) ++(234:\udist) node (v4){\textcolor{white}{\scriptsize$\boldsymbol{v_4}$}};
	\path (v6) ++(90:\udist) node (v1){\textcolor{white}{\scriptsize$\boldsymbol{v_1}$}};
	
	 [ultra thick]\path (v10) ++(342:0.52573111211*\vdist) node 
	  (invisible)[fill=white,draw=white] {};

  	\path(invisible) ++(126+\shift:\wdist) node (w1){\scriptsize\textcolor{white}{$\boldsymbol{w_1}$}};
	\path(invisible) ++(198+\shift:\wdist) node (w5){\scriptsize\textcolor{white}{$\boldsymbol{w_5}$}};
	\path(invisible) ++(270+\shift:\wdist) node (w4){\scriptsize\textcolor{white}{$\boldsymbol{w_4}$}};
	\path(invisible) ++(342+\shift:\wdist) node (w3){\scriptsize\textcolor{white}{$\boldsymbol{w_3}$}};
	\path(invisible) ++(414+\shift:\wdist) node (w2){\scriptsize\textcolor{white}{$\boldsymbol{w_2}$}};
      \draw [ultra thick] (v10)--(v8);
\draw [ultra thick] (v10)--(v5);
\draw  [ultra thick](v7)--(v2);
\draw [ultra thick] (v9)--(v4);
\draw  [ultra thick](v6)--(v1);
\draw [ultra thick] (v8)--(v3);
\draw [ultra thick] (v5)--(v1);
\draw [ultra thick] (v2)--(v1);
\draw [ultra thick] (v2)--(v3);
\draw [ultra thick] (v4)--(v3);
\draw [ultra thick] (v4)--(v5);

     \draw (w1)--(v1);
     \draw (w1)--(v4);
     \draw (w1)--(v7);
     \draw (w1)--(v8);

     \draw (w2)--(v2);
\draw (w2)--(v5);
\draw (w2)--(v8);
\draw (w2)--(v9);

\draw (w3)--(v1);
\draw (w3)--(v3);
\draw (w3)--(v10);
\draw (w3)--(v9);

\draw (w4)--(v2);
\draw (w4)--(v4);
\draw (w4)--(v6);
\draw (w4)--(v10);

\draw (w5)--(v5);
\draw (w5)--(v3);
\draw (w5)--(v6);
\draw (w5)--(v7);

\end{tikzpicture}

\caption{The graph $P^{*}$.\label{f:P15}}
\end{center}

\end{figure}
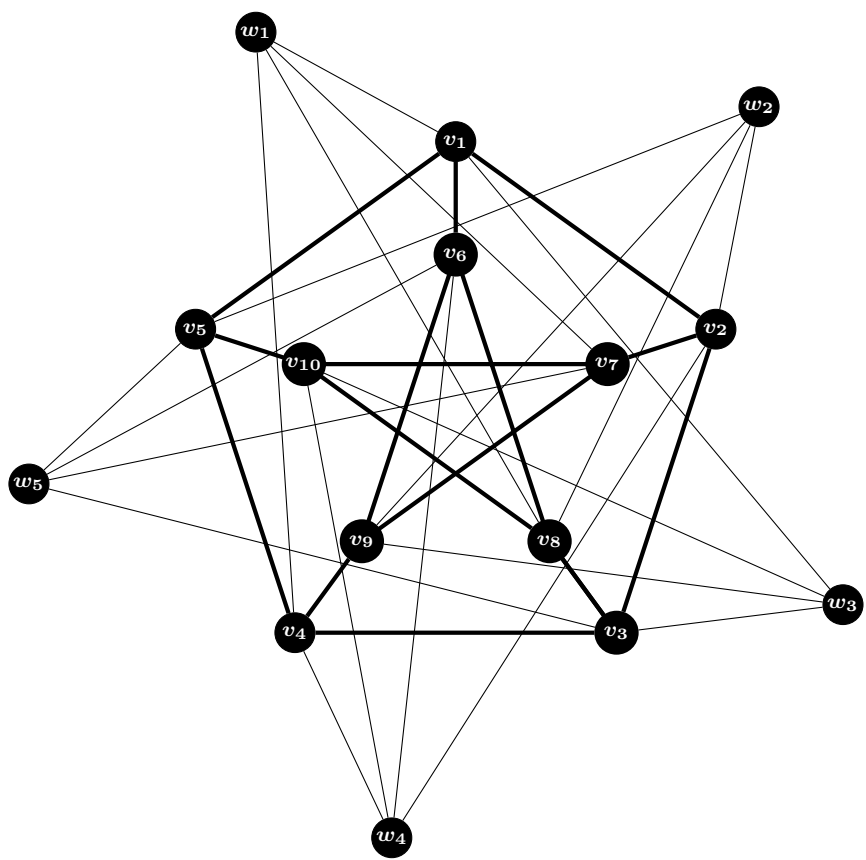

\begin{lem}\label{l:PUniform} Let the weighted graph $(P^{*},\omega)$ be $(1/500)$-balanced. Then it has a \(\frac{1}{80}\)-uniform sparse half.
\end{lem}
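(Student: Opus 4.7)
The proof builds on the combinatorial structure of the Petersen graph $P$, realized as the Kneser graph $KG(5,2)$ whose vertices are $2$-subsets of $\{1,\ldots,5\}$ with edges between disjoint pairs. A direct enumeration shows that there are exactly $60$ five-vertex subsets of $V(P)$ that span exactly $2$ edges, which I will call \emph{sparse halves of $P$}. Under $\operatorname{Aut}(P)\cong S_5$ they split into two orbits: $30$ \emph{matching-type} subsets of the form $\binom{S}{2}\setminus\{\{p,q\}\}$ for a $4$-subset $S\subseteq\{1,\ldots,5\}$ and an omitted pair $\{p,q\}\in\binom{S}{2}$ (the induced Kneser graph on $\binom{S}{2}$ is a perfect matching on six vertices, so removing one vertex leaves $2$ edges), and $30$ \emph{cherry-type} subsets of the form $I_j\cup\{\{p,q\}\}$ for a star $I_j\in\mc{I}(P)$ and an extra pair $\{p,q\}\subseteq\{1,\ldots,5\}\setminus\{j\}$ (the extra vertex is adjacent to exactly the two elements $\{j,r\}\in I_j$ with $r\notin\{p,q\}$). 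Counting shows that every vertex of $P$ lies in $30$ of these sparse halves and every edge in $8$ (four of each type). A crucial further fact is that for each matching-type $A$ there is a unique index $i$ with $I_i\cap A=\emptyset$ (namely the element of $\{1,\ldots,5\}$ missing from $S$), whereas for a cherry-type $A$ every $I_i$ meets $A$ in $1$, $2$, or $4$ vertices.

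The plan is to take $\mathbf{s}$ to be the uniform distribution over $60$ halves $s_A$ of $(P^{*},\omega)$, one for each sparse half $A$. In the ideal case $\omega(v)=1/10$ on $V(P)$ and $\omega(w_i)=0$, the assignment $s_A(v)=\omega(v)$ for $v\in A$ and $s_A(v)=0$ otherwise is already a half with $s_A(E(P^{*}))=2(1/10)^2=1/50$. In the general $(1/500)$-balanced case, $\omega(A)\in[1/2-1/100,\,1/2+1/100]$, so some adjustment of $s_A$ is needed to guarantee $s_A(V(P^{*}))=1/2$: when $\omega(A)\geq 1/2$ I scale the restriction to $A$ by the factor $1/(2\omega(A))$; when $\omega(A)<1/2$ I keep $s_A(v)=\omega(v)$ on $A$ and place the deficit $1/2-\omega(A)$ on $\{w_1,\ldots,w_5\}$, using the safe $w_i$ (with $I_i\cap A=\emptyset$) when one exists and otherwise distributing it among the $w_i$'s with the fewest neighbors in $A$.

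For the edge-coverage condition $\E{\mathbf{s}(e)}\geq \omega(e)/80$, a Petersen edge $e$ is contained in $8$ of the $60$ sparse halves, each giving $s_A(e)\geq (1-O(1/500))^2\omega(e)$, so $\E{\mathbf{s}(e)}\geq (8/60)(1-O(1/500))^2\omega(e)$, which comfortably exceeds $\omega(e)/80$; for a $vw_i$ edge, the weight $\omega(e)\leq (51/500)(1/500)$ is tiny, and the nonzero $s_A(w_i)$ placed in the deficit-case halves provides the required coverage.

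The delicate part is verifying the total-weight condition $\E{\mathbf{s}(E(P^{*}))}\leq 1/50$. In the pure uniform case this is an equality. Under perturbation the change in $\E{\mathbf{s}(E(P^{*}))}$ decomposes into: (a) perturbations of the internal edge weights within each $A$, (b) a decrease coming from the scaling factor $1/(2\omega(A))^2<1$ when $\omega(A)>1/2$, and (c) an increase from cross edges $vw_i$ created when the deficit is placed on non-safe $w_i$'s, which is unavoidable for cherry-type $A$. Averaging over the $60$ sparse halves and exploiting the edge-transitivity of $P$ together with the identity $\sum_{v\in V(P^{*})}\omega(v)=1$ should cause the first-order contributions of (a) and (c) to be cancelled by (b), with any higher-order residual controlled by the balance parameter $1/500$. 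The main technical obstacle is carrying out this cancellation in the cherry-type case, where no safe $w_i$ is available: a finer neighborhood analysis distinguishing the $w_i$'s with $|A\cap I_i|=1$ from those with $|A\cap I_i|=2$ is needed to show that the cross-edge weight added to $s_A(E(P^{*}))$ for these halves averages, over the uniform distribution, to at most what is freed up by the scaled-down halves with $\omega(A)>1/2$.
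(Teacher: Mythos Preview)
Your construction has a genuine gap at the edge-coverage condition for the edges $vw_i$ of $P^{*}$ outside $P$. Consider the $(1/500)$-balanced weighting with $\omega(v)=\tfrac{1}{10}+\tfrac{1}{500}$ for $v\in I_2$, $\omega(v)=\tfrac{1}{10}-\tfrac{1}{500}$ for the six remaining vertices of $P$, and $\omega(w_1)=\omega(w_2)=\tfrac{1}{500}$, $\omega(w_3)=\omega(w_4)=\omega(w_5)=0$. Take the edge $e=vw_1$ with $v=\{1,2\}$. The only five-vertex sparse halves $A$ with $v\in A$ for which your rule could place positive mass on $w_1$ are those with $|A\cap I_1|\leq 1$; since $v\in I_1\cap A$, your own classification shows these are exactly the three cherry-type sets $A=I_2\cup\{p\}$, $p\in\binom{\{3,4,5\}}{2}$. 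But each of them has $\omega(A)=\omega(I_2)+\omega(p)=\tfrac12+\tfrac{3}{500}>\tfrac12$, so you scale down and put nothing on $W$. Hence $\E{\textbf{s}(e)}=0$ while $\omega(e)>0$, and the $1/80$-coverage fails outright. The same weighting exposes a validity problem: the matching-type half $A=\binom{\{1,2,4,5\}}{2}\setminus\{\{2,4\}\}$ has deficit $\tfrac12-\omega(A)=\tfrac{1}{500}$ and safe vertex $w_3$, yet $\omega(w_3)=0$, so $\pl{s}_A(w_3)\leq\omega(w_3)$ cannot hold. More generally the deficit can be as large as $1/100$ while $\omega(W)$ can be arbitrarily small, so absorbing the deficit on $W$ alone is simply not possible. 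The underlying defect is structural: in your scheme the mass on $W$ appears only contingently, when a deficit happens to occur in the right half, whereas the $c$-uniformity condition demands a guaranteed lower bound on $\E{\textbf{s}(vw_i)}$ uniform in $\omega$.

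The paper sidesteps both obstacles by a different design. Its $20$ halves $\pl{s}_{i,j}$ are supported on the six-vertex sets $M_i=V(P)\setminus N(w_i)$ together with $w_i$ and one further $w_{i_j}$, and they \emph{always} carry the full weight $\omega(w_i)$ and a quarter of $\omega(w_{i_j})$, independently of any deficit. This makes the coverage of every $vw_i$ edge immediate and uniform in $\omega$. The mass needed to reach $\tfrac12$ is then placed on three vertices of $M_i\subseteq V(P)$, each of weight near $\tfrac{1}{10}$, so validity is automatic from the balance hypothesis. The sparseness condition $\E{\textbf{s}(E(P^{*}))}\leq\tfrac{1}{50}$ then reduces to a single explicit inequality (Lemma~\ref{petersentechnicallem}), rather than the first-order cancellation between cherry-type cross terms and scaled-down halves that you yourself flag as the ``main technical obstacle'' and leave unresolved.
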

\begin{proof} Let $\delta:=1/500$ and the vertices of $P^{*}$ be labeled as in Figure~\ref{f:P15}, where \(V:=V(P) = \{v_1, v_2, \dots, v_{10}\}\) and \(W:=V(P^*) \setminus V(P)=\{w_1, w_2, \dots, w_5\}\). We define a collection \(\{\pl{s}_{i,j}\}_{i \in [5],j \in [4]}\) of halves of $(P^{*},\omega)$. Fix \(i \in [5]\) and consider the vertex \(w_i\). Let \(M_i := V \setminus N(w_i)\) and not that $M_i$ induces a matching of size three. Choose vertices \(\{v_{i_1}, v_{i_2}, v_{i_3}\}\subseteq V(M_i)\) such that they are independent and there exist unique \(w_{i_j} \neq w_i\) such that  every \(v\in  V(M_i)\setminus \{v_{i_1}, v_{i_2}, v_{i_3}\}\) is adjacent to \(w_{i_j}\).  Note that for every \(i\) there exist four such choices of \(\{v_{i_1}, v_{i_2}, v_{i_3}\}\), fix one of them and assign
\[\pl{s}_{i,j}(w_q)  = \begin{cases} \omega(w_q), &\mbox{ if } q=i, \\
\frac{1}{4}\omega(w_q), & \mbox{ if } q=i_j, \\
0,&\mbox{ otherwise}. \end{cases}\]
and
\[\pl{s}_{i,j}(v_{k})  = \begin{cases} 0, &\mbox{ if } v_k\notin V(M_i) \\
\omega(v_{k}), &\mbox{ if } k=j_1,j_2,j_3, \\
\frac{1}{3}\left(\frac{1}{2}-(\omega(v_{i_1}) + \omega(v_{i_2}) + \omega (v_{i_3}) + \omega (w_i) + \frac{1}{4}\omega(w_{i_j}))\right), & \mbox{ otherwise}. \end{cases}\]
It is easy to check that every \(\pl{s}_{i,j}\) is a half. Let \(\textbf{s}\) be a distribution concentrated on \(\{\pl{s}_{i,j}\}_{i \in [5],j \in [4]}\) with each of the halves having the same probability $1/20$. We show that \(\textbf{s}\) is a $(1/80)$-uniform sparse half for $\omega$.

First, we show that $\E{\textbf{s}(e)} \geq \frac{1}{80} \cdot {\omega(e)}$ for every edge \(e\in E(P^{*})\). It can be routinely checked that for our choice of $\delta$ one has
\begin{equation}\label{e:petersen1}
\frac{1}{3}\left(\frac{1}{2}-(\omega(v_{i_1}) + \omega(v_{i_2}) + \omega (v_{i_3}) + \omega (w_i) + \frac{1}{4}\omega(w_{i_j}))\right)\geq \frac{1}{3} \omega(v_k),
\end{equation}
for every \(v_k \in V(M_i) \setminus\{v_{i_1}, v_{i_2}, v_{i_3}\}\).
If both ends $v,v'$ of $e \in E(P^{*})$ lie in $V$ then, using (\ref{e:petersen1}), we have
\[\E{\textbf{s}(e)} \geq \frac{4}{20}\cdot \frac{1}{3}\omega(v)\omega(v') = \frac{1}{15} \omega(e) \geq \frac{1}{80}{\omega(e)}.\]
If $e$ joins $v \in V$ and $w \in W$ then
\[\E{\textbf{s}(e)} \geq \frac{3}{20} \cdot \frac{1}{4}\omega(w_i) \cdot \frac{1}{3} \omega(v_j) = \frac{1}{80}\omega(e) \]

It remains to prove that \(\E{\textbf{s}(E(G))} \leq  \frac{1}{50}.\) Note that,
\begin{align*}
\pl{s}_{i,j}(E(G)) &= \left(\frac{1}{2} -(\omega(v_{i_1}) + \omega(v_{i_2}) + \omega (v_{i_3}) + \omega (w_i) +\frac{1}{4}\omega(w_{i_j}))\right) \times\\ &\times \left(\frac{1}{4}\omega(w_{i_j}) + \frac{1}{3}\left(\omega(v_{i_1}) + \omega(v_{i_2}) + \omega (v_{i_3})\right)\right).
\end{align*}

We finish the proof using the following technical lemma, the  proof of which is included in the appendix.
\begin{lem}
\label{petersentechnicallem}
Suppose given are \(x_1, x_2, \dots , x_{10}, y_1, y_2, \dots y_5\) reals and \[L(y_i)=\{x_{i+1}, x_{i+2}, x_{i+4}, x_{i+5},x_{i+8}, x_{i+9}\},\] for each \(1\leq i \leq 5\) such that \(0\leq x_i\leq 1\) , \(0\leq y_j \leq 1\) and \(\sum_{i=1}^{10}{x_i} +\sum_{j=1}^{5}{y_j}= 1\). If there exists some \(0<\delta \leq \frac{1}{90}\) such that \(x_i\geq \frac{1}{10}-\delta\)  for each \(i=1,2,\dots,10\) then
 \begin{align}
 \label{petersensparsehalf}
 \sum_{i\neq j \atop{x_{{i,j}_1}, x_{{i,j}_2},x_{{i,j}_3} \atop{\in L(y_i) \cap L(y_j)}}}\left(\frac{1}{2} -x_{{i,j}_1} -x_{{i,j}_2} -x_{{i,j}_3} - y_i-\frac{1}{4}y_j\right)  \left(\frac{1}{4}y_j\ + \frac{1}{3}\left(x_{{i,j}_1} + x_{{i,j}_2} + x_{{i,j}_3}\right)\right)\leq\frac{2}{5}.
\end{align}
\end{lem}
It is easy to see that \(\E{\textbf{s} (E(G))} \leq  \frac{1}{50}.\) follows from Lemma~\ref{petersentechnicallem} applied with \(x_i := \omega(v_i)\) and \(y_j :=\omega(w_j)\). Thus $\textbf{s}$ is a $1/80$-uniform sparse half, as claimed.
\end{proof}

As promised, Lemma~\ref{l:PUniform} implies the main theorem of this section.

\begin{thm}
\label{petersenmaintheorem} There exists \(\delta>0\) such that any triangle-free graph \(G\) on \(n\) vertices which can be \(\delta\)-approximated by the Petersen graph has a sparse half.
\end{thm}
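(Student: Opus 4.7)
The plan is to deduce Theorem~\ref{petersenmaintheorem} as a direct application of Theorem~\ref{t:CloseMain} with $H := P$, the Petersen graph. This requires verifying two hypotheses: that $P$ is an entwined maximal triangle-free graph, and that there exists $\alpha > 0$ such that every $\alpha$-balanced weighted graph $(P^{*},\omega)$ admits an $\alpha$-uniform sparse half.

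For the first hypothesis, $P$ is famously triangle-free with girth $5$ and diameter $2$; in particular, any two non-adjacent vertices share a common neighbor, so adding any edge to $P$ produces a triangle, making $P$ maximal triangle-free. Since $P$ is $3$-regular while $\alpha(P)=4$, no vertex neighborhood is a maximum independent set, so $\mc{I}^{*}(P) = \mc{I}(P)$. One then verifies, as noted earlier in the paper, that the five maximum independent sets of $P$ pairwise intersect (indeed, any two of them share exactly two vertices), so $\mc{I}^{*}(P)$ is intersecting and $P$ is entwined.

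For the second hypothesis I take $\alpha := 1/500$. By Lemma~\ref{l:PUniform}, every $(1/500)$-balanced weighted graph $(P^{*},\omega)$ has a $(1/80)$-uniform sparse half; since $1/80 \geq 1/500$, the condition $\E{\textbf{s}(e)} \geq c\,\omega(e)$ for $c=1/80$ implies the same condition for $c=1/500$, so such a distribution is also a $(1/500)$-uniform sparse half. Hence the hypothesis of Theorem~\ref{t:CloseMain} is met with $\alpha = 1/500$, and that theorem produces a $\delta > 0$ such that every triangle-free graph $G$ that can be $\delta$-approximated by $P$ has a sparse half, which is precisely Theorem~\ref{petersenmaintheorem}.

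There is essentially no obstacle left at this stage, since all the technical work was done in Section~\ref{s:uniform} (to set up $H^{*}$, balanced weightings, disturbed subgraphs, and the reduction via Theorem~\ref{t:CloseMain}) and in Lemma~\ref{l:PUniform} (the explicit construction of a uniform sparse half in $(P^{*},\omega)$). The only cosmetic point is to reconcile the two parameters $1/500$ and $1/80$ appearing in Lemma~\ref{l:PUniform} into a single $\alpha$ meeting the hypotheses of Theorem~\ref{t:CloseMain}, which is accomplished by choosing the smaller of them.
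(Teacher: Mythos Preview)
Your proof is correct and follows exactly the paper's approach: invoke Theorem~\ref{t:CloseMain} with $H=P$ and $\alpha=1/500$, using Lemma~\ref{l:PUniform} to supply the required uniform sparse half (and noting that a $(1/80)$-uniform sparse half is a fortiori $(1/500)$-uniform). One minor slip: any two maximum independent sets of the Petersen graph share exactly \emph{one} vertex, not two (think of $P$ as the Kneser graph $K(5,2)$, where the maximum independent sets are the five stars), but this does not affect your argument since all you need is that $\mc{I}^{*}(P)$ is intersecting.
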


\begin{proof}The theorem follows from Theorem~\ref{t:CloseMain}, as the Petersen graph satisfies the requirements of that theorem with $\alpha=1/500$ by Lemma~\ref{l:PUniform}.
\end{proof}

\bibliographystyle{amsplain}
\bibliography{lib}

\section{Appendix}

\subsection*{Proof of Lemma~\ref{8cycletechnicallem}}
Suppose the Lemma is false. Then for each \(i =1,2,\dots, 8\)
\begin{equation}
\label{equation8}
\frac{1}{2}\left(\frac{1}{2}-x_i-x_{i+1}-x_{i+2}\right)\left(x_i +x_{i+2}\right) + \frac{1}{4}\left(\frac{1}{2}-x_i-x_{i+1}-x_{i+2}\right)^2 > \frac{1}{50}.
\end{equation}
Summing up these inequalities over all \(i= 1, 2,\dots , 8\) we get that

\begin{align}
\begin{split}
\label{8thcycle}
\frac{8}{50}&<\sum_{i=1}^{8}{\frac{1}{2}\left(\frac{1}{2}-x_i-x_{i+1}-x_{i+2}\right)\left(x_i +x_{i+2}\right)} + \sum_{i=1}^{8}{\frac{1}{4}\left(\frac{1}{2}-x_i-x_{i+1}-x_{i+2}\right)^2} \\
&=\frac{1}{2} \sum_{i=1}^{8}{x_i} - \sum_{i=1}^{8}{\left(x_i + x_{i+1} + x_{i+2}\right)x_i} + \frac{1}{4}\cdot\frac{1}{4}\cdot8 -\frac{1}{4}\sum_{i=1}^{8}{(x_i+ x_{i+1} + x_{i+2})} \\ &\qquad \qquad \qquad \qquad+ \frac{1}{4} \sum_{i=1}^{8}{(x_i+x_{i+1} + x_{i+2})^2}\\
&=\frac{1}{2}  - \sum_{i=1}^{8}{(x_i + x_{i+1} + x_{i+2})x_i} + \frac{1}{2} -\frac{3}{4} + \frac{1}{4} \sum_{i=1}^{8}{(x_i+x_{i+1} + x_{i+2})^2}\\
&=\frac{1}{4}  - \frac{1}{4}\sum_{i=1}^{8}{{x_i}^2}-\frac{1}{2}\sum_{i=1}^{8}{x_ix_{i+2}}.
\end{split}
\end{align}

Let us find the maximum value under the conditions of the lemma. To find the maximum value of the expression in (\ref{8thcycle}), we need to find the minimum value of
\[S:=\frac{1}{4}\sum_{i=1}^{8}{{x_i}^2}+\frac{1}{2}\sum_{i=1}^{8}{x_ix_{i+2}}.\]

\begin{claim}\label{c:71} For every \(1\leq i \leq 8\)
\[x_{i+1}+ x_{i+2} + x_{i+3} < 0.394\]
\end{claim}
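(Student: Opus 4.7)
The plan is to derive the claim from a single instance of inequality~(\ref{equation8}), rather than from the summed form~(\ref{8thcycle}). Since the claim concerns the triple sum $T := x_{i+1}+x_{i+2}+x_{i+3}$, I would apply~(\ref{equation8}) with its index shifted by one. Writing $U := x_{i+1}+x_{i+3}$ and factoring out $\tfrac{1}{4}(1/2-T)$ rewrites that instance as $(1/2-T)(2U + 1/2 - T) > 2/25$.

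Next, I would simplify the second factor using the identity $2U = 2T - 2x_{i+2}$, so that $2U+1/2-T = T - 2x_{i+2} + 1/2$. Invoking the hypothesis $x_{i+2} \geq 1/14$ then yields the estimate $2U+1/2-T \leq T + 5/14$. (One should separately note that $T<1/2$: otherwise the first factor is nonpositive while the second is nonnegative, since $x_{i+2} \leq 1/2$ is forced by $\sum x_j = 1$ and $x_j \geq 1/14$; this would contradict a strictly positive product.) Combined with the displayed inequality from the previous paragraph, this gives the single-variable bound $(1/2-T)(T + 5/14) > 2/25$.

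Finally, the function $f(T) := (1/2-T)(T+5/14)$ has derivative $1/7-2T$ and is therefore strictly decreasing on the range $T\geq 1/14$, which is certainly satisfied here since $T \geq 5/14$ by the lemma's hypothesis. A brief arithmetic check shows $f(0.394) < 2/25$; indeed the larger root of $f(T)=2/25$ is $(5+2\sqrt{127})/70 \approx 0.3935$. So $T \geq 0.394$ would force $f(T) \leq f(0.394) < 2/25$, a contradiction, which proves the claim. The only subtle point is noticing that a single instance of~(\ref{equation8}) together with the pointwise bound $x_j \geq 1/14$ already suffices; once this is identified, the rest is routine algebra and an arithmetic check that happens to be tight near the cutoff $0.394$, which explains why that particular numerical threshold appears in the statement.
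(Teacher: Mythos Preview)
Your proposal is correct and takes essentially the same approach as the paper: apply a single shifted instance of~(\ref{equation8}), use the lower bound $x_{i+2}\geq 1/14$ to reduce to a one-variable quadratic inequality in $T=x_{i+1}+x_{i+2}+x_{i+3}$, and read off the root $\approx 0.3934$. Your factored form $(1/2-T)(T+5/14)>2/25$ is exactly four times the paper's simplified expression $-\alpha^2/4+\alpha/28+5/112>1/50$, and your additional observation that $T<1/2$ (needed so that the substitution preserves the inequality direction) is a small gap the paper's write-up glosses over.
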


\begin{proof}
By inequality~(\ref{equation8})
\[\frac{1}{2}\left(\frac{1}{2}-(x_{i+1}+x_{i+2}+x_{i+3})\right)\left(x_{i+1} +x_{i+3}\right) + \frac{1}{4}\left(\frac{1}{2}-(x_{i+1}+x_{i+2}+x_{i+3})\right)^2 > \frac{1}{50}.\]
Let \(\alpha = x_{i+1} + x_{i+2} + x_{i+3}\).  Then
$x_{i+1} + x_{i+3} = \alpha - x_{i+2} \leq \alpha - \frac{	1}{14}.$ Therefore
\[
\frac{1}{2}\left(\frac{1}{2}-\alpha\right)\left(\alpha - \frac{1}{14}\right) + \frac{1}{4}\left(\frac{1}{2}-\alpha\right)^2=
-\frac{{\alpha}^2}{4} + \frac{\alpha}{28} + \frac{5}{112} >
 \frac{1}{50},
\]
which reduces to  the inequality
\[\frac{{\alpha}^2}{4} -\frac{\alpha}{28} -\frac{69}{2800} <0. \]
This quadratic inequality gives us the desired \(\alpha < 0.393412\) bound. \end{proof}

It is easy to check that the following claim is true.
\begin{claim}\label{c:72}
\[S \geq
\frac{1}{2}\sum_{i=1}^{4}{{z_i}^2}+\sum_{i=1}^{4}{z_iz_{i+2}},
\]
where  \(z_i = (x_i + x_{i+4})/2\) for all \(1\leq i \leq 4\).
\end{claim}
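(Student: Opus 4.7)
The plan is to prove Claim~\ref{c:72} by a direct algebraic calculation: expand both sides in terms of $x_1,\ldots,x_8$ and observe that the difference is a manifestly nonnegative sum of squares.

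First I would substitute $z_i = (x_i + x_{i+4})/2$ and compute $\tfrac{1}{2}\sum_{i=1}^{4} z_i^2 = \tfrac{1}{8}\sum_{i=1}^{8} x_i^2 + \tfrac{1}{4}\sum_{i=1}^{4} x_i x_{i+4}$. For the cross-term sum $\sum_{i=1}^{4} z_i z_{i+2}$ (with $z$-indices taken modulo $4$), I would write each product as $\tfrac{1}{4}(x_i+x_{i+4})(x_{i+2}+x_{i+6})$ and observe the ``folding'' identity: the summands for $i=1$ and $i=3$ yield the same four products $x_1x_3, x_1x_7, x_3x_5, x_5x_7$, while $i=2$ and $i=4$ yield $x_2x_4, x_2x_8, x_4x_6, x_6x_8$. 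Collecting these eight second-neighbor products of the $8$-cycle, one obtains $\sum_{i=1}^{4} z_i z_{i+2} = \tfrac{1}{2}\sum_{i=1}^{8} x_i x_{i+2}$.

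With these two identities, subtracting the right-hand side of the claim from $S$ cancels the $x_ix_{i+2}$ contributions, leaving $\tfrac{1}{8}\sum_{i=1}^{8} x_i^2 - \tfrac{1}{4}\sum_{i=1}^{4} x_i x_{i+4} = \tfrac{1}{8}\sum_{i=1}^{4}(x_i - x_{i+4})^2 \geq 0$, which establishes the claim. This is essentially a one-line identity, so no genuine obstacle arises; the only step that warrants attention is the folding of $\sum z_iz_{i+2}$ onto $\sum x_ix_{i+2}$, where one must track multiplicities carefully under the modular-$4$ indexing on the left and modular-$8$ indexing on the right. Note that, in contrast to Claim~\ref{c:71}, the inequality does not use the lower bounds $x_i\ge 1/14$ or the hypothesis $x_i+x_{i+1}+x_{i+2}\ge 5/14$; it is an unconditional algebraic inequality that holds for any real vector $(x_1,\dots,x_8)$.
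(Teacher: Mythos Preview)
Your proof is correct. The paper does not spell out an argument for this claim at all, merely stating that it is easy to check; your direct expansion showing that $S$ minus the right-hand side equals $\tfrac{1}{8}\sum_{i=1}^{4}(x_i-x_{i+4})^2$ is exactly the natural verification the paper is leaving to the reader.
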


\begin{claim}\label{c:73} For every \(1\leq i \leq 4\),
\(z_i > 0.106\).
\end{claim}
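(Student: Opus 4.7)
The plan is that Claim~\ref{c:73} follows in a few lines from Claim~\ref{c:71}. The key combinatorial observation is that on the cyclic index set $\{1, 2, \dots, 8\}$ the antipodal pair $\{i, i+4\}$ together with the two cyclically consecutive triples $\{i+1, i+2, i+3\}$ and $\{i+5, i+6, i+7\}$ (indices mod $8$) forms a partition of $\{1,\dots,8\}$. Since $\sum_{j=1}^{8} x_j = 1$ by hypothesis, this identity gives
\[ x_i + x_{i+4} \;=\; 1 \;-\; (x_{i+1} + x_{i+2} + x_{i+3}) \;-\; (x_{i+5} + x_{i+6} + x_{i+7}). \]

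Next I would apply Claim~\ref{c:71} to each of the two triples on the right-hand side. Each triple consists of three cyclically consecutive variables, exactly the form that Claim~\ref{c:71} bounds, so each parenthesized sum is strictly less than $0.394$. Subtracting these two strict inequalities from the identity above yields
\[ x_i + x_{i+4} \;>\; 1 - 2(0.394) \;=\; 0.212, \]
and therefore $z_i = (x_i + x_{i+4})/2 > 0.106$, as desired.

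There is essentially no obstacle to this argument. The only thing requiring even a moment's thought is the combinatorial verification that the two cyclically consecutive triples omitting $\{i, i+4\}$ are disjoint, which is immediate from the symmetry of the $8$-cycle (the two triples are separated by single indices on both sides). The argument has comfortable slack, since Claim~\ref{c:71} actually establishes the sharper bound $\alpha < 0.393412$, which would give $z_i > 0.106588$; rounding or small errors therefore pose no risk. Note also that the hypothesis $x_j \geq 1/14$ from Lemma~\ref{8cycletechnicallem} is not needed directly in this step, as it was already used upstream in the proof of Claim~\ref{c:71}.
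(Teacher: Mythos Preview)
Your proposal is correct and follows essentially the same approach as the paper: partition the eight indices into the pair $\{i,i+4\}$ and the two cyclically consecutive triples, apply Claim~\ref{c:71} to each triple, and deduce $2z_i = x_i + x_{i+4} > 1 - 2(0.394) = 0.212$.
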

\begin{proof} For every \(1\leq i \leq 4\)  we have
\begin{align*}
 1 &=
(x_i+ x_{i+4}) + (x_{i+1} + x_{i+2} + x_{i+3}) + ( x_{i+5} + x_{i+6} + x_{i+7} ) \stackrel{(\ref{c:71})}< 2z_i + 2 \cdot 0.394,
\end{align*}
therefore
\(z_i > 0.106\).\end{proof}

Let  \(\beta = z_1 +z_3\). Then
\begin{align*}
S &\stackrel{(\ref{c:72})}{\geq}
\frac{1}{2} {(z_1 + z_3)}^2 +\frac{1}{2}  {(z_2 + z_4)}^2 + z_1z_3 + z_2z_4 \\ &=\frac{1}{2}{\beta}^2 +\frac{1}{2}{\left(\frac{1}{2} - \beta\right)}^2  +  z_1z_3 + z_2z_4   \\
&\stackrel{(\ref{c:73})}{\geq}\frac{1}{2}{\beta}^2 +\frac{1}{2}{\left(\frac{1}{2} - \beta\right)}^2  +  0.106 \cdot (\beta - 0.106)  + 0.106\left(\frac{1}{2}-\beta - 0.106\right) \\
&={\beta}^2- \frac{1}{2}\beta + 0.155528\\
& >0.093
\end{align*}
The last two inequalities hold  because
for fixed \(\beta\) the expression $z_1z_3 +z_2z_4$
achieves its minimum value when \(z_1 = z_2 = 0.106\), \(z_3 =\beta - 0.106\) and  \(z_4 = \frac{1}{2} -\beta - 0.106\). The expression
${\beta}^2- \frac{1}{2}\beta + 0.155528$ achieves its minimum for \(\beta = \frac{1}{4}.\)
It follows from the inequality above that
\[\frac{1}{4}  - \frac{1}{4}\sum_{i=1}^{8}{{x_i}^2}-\frac{1}{2}\sum_{i=1}^{8}{x_ix_{i+2}} = \frac{1}{4} - S  < \frac{8}{50},\]
a contradiction that finishes the proof of the lemma.

\subsection*{Proof of Lemma \ref{11cycletechnicallem}}
Suppose the lemma is false. Then for all \(1 \leq i \leq 11\) we have
\begin{align*}
&\frac{1}{2}\left(\frac{1}{2} - (x_{i+1} +x_{i+2}+x_{i+3} + x_{i+4})\right)(x_{i+1} + x_{i+4}) \\
&+ \frac{1}{4}\left(\frac{1}{2} - (x_{i+1} +x_{i+2}+x_{i+3} + x_{i+4})\right)^2 >1/50.
\end{align*}
Summing these inequalities for \(1 \leq i \leq 11\) we obtain
\begin{align}
\begin{split}
\label{11thcycle}
\frac{11}{50} &<  \sum_{i=1}^{11}{\frac{1}{2}\left(\frac{1}{2} - (x_{i+1} +x_{i+2}+x_{i+3} + x_{i+4})\right)(x_{i+1}+ x_{i+4})} \\
&+\sum_{i=1}^{11}{ \frac{1}{4}\left(\frac{1}{2} - (x_{i+1} +x_{i+2}+x_{i+3} + x_{i+4})\right)^2}\\
&=  \frac{1}{4}\sum_{i=1}^{11}{(x_{i+1} + x_{i+4})} - \sum_{i=1}^{11}{(x_{i+1} +x_{i+2}+x_{i+3} + x_{i+4})x_{i+1}} \\
& + \frac{11}{16}- \frac{1}{4}\sum_{i=1}^{11}{(x_{i+1}+x_{i+2}+x_{i+3} + x_{i+4})} + \frac{1}{4}\sum_{i=1}^{11}{{\left(x_{i+1}+x_{i+2}+x_{i+3} + x_{i+4}\right)}^2} \\
&= \frac{3}{16} - \sum_{i=1}^{11}{(x_{i+1} +x_{i+2}+x_{i+3}+x_{i+4})x_{i+1}} + \frac{1}{4}\sum_{i=1}^{11}{{\left(x_{i+1}+x_{i+2}+x_{i+3} + x_{i+4}\right)}^2} \\
&=\frac{3}{16} + \frac{1}{2} \sum_{i=1}^{11}x_i(x_{i+1}-x_{i+3}) \\
&\leq \frac{3}{16} + \frac{1}{2} \left( \sum_{i=1}^{11}{x_ix_{i+1}} -   \frac{11}{196}\right).
\end{split}
\end{align}
We claim that
\[f(x_1,\ldots,x_{11}):=\sum_{i=1}^{11}{x_ix_{i+1}} \leq \frac{77}{784}.\]
Indeed, consider any  pair \((x_i,x_j)\),  such that  \(j \neq i \pm 1\),  let \(\alpha = x_i + x_j\) is fixed. Note that $f$
is linear as a function  of \((x_i,x_j)\). Therefore \(f(x_i,x_j)\) achieves its maximum value on the region $R:=\{0\leq x_i \leq 1/14$~for~$1 \leq 1 \leq 14$,~$\sum_{i=1}^{11}x_i=1\}$, when \(x_i = \frac{1}{14}\) and \(x_j = \alpha - \frac{1}{14}\), or when \(x_j = \frac{1}{14}\) and \(x_i = \alpha - \frac{1}{14}\). Thus $f$ attains its maximum on $R$ when all  variables are equal \(\frac{1}{14}\) except possibly  two of them whose indices are consecutive, without loss of generality, say \(x_{10}\) and  \(x_{11}\). It is easy to see that the maximum is achieved for $x_{10} = x_{11} = \frac{5}{28}$ and is equal to $\frac{77}{784},$ as claimed.

Thus (\ref{11thcycle}) implies
\[ \frac{11}{50} < \frac{3}{16} + \frac{1}{2} \left( \sum_{i=1}^{11}{x_ix_{i+1}} -   \frac{11}{196}\right) \leq \frac{3}{16} + \frac{1}{2}\cdot \left(\frac{77}{784} - \frac{11}{196}\right)  = \frac{327}{1568} < \frac{11}{50},
\]
a contradiction that finishes the proof.

\subsection*{Proof of Lemma~\ref{petersentechnicallem}} Let \(Y:=\sum_{i=1}^5{y_i}\). Summing inequalities~(\ref{petersensparsehalf}) over all \(i, j\) such that \(i\neq j\) and \(x_{{i,j}_1}, x_{{i,j}_2},x_{{i,j}_3} \in L(y_i) \cap L(y_j)\). We get 
\begin{align*}
&\sum_{i\neq j \atop {}}{\left(\frac{1}{2} -(x_{{i,j}_1} +x_{{i,j}_2} +x_{{i,j}_3} + y_i+\frac{1}{4}y_j)\right)  \left(\frac{1}{4}y_j\ + \frac{1}{3}\left(x_{{i,j}_1} + x_{{i,j}_2} + x_{{i,j}_3}\right)\right)}\\
&=\frac{1}{8}\sum_{i\neq j}{y_j} -\frac{1}{4}\sum_{i\neq j}{y_j\left(x_{{i,j}_1} +x_{{i,j}_2} +x_{{i,j}_3}\right)} +\frac{1}{6}\sum_{i\neq j}(x_{{i,j}_1} +x_{{i,j}_2} +x_{{i,j}_3}) \\
&-\frac{1}{3} \sum_{i\neq j}{\left(x_{{i,j}_1} +x_{{i,j}_2} +x_{{i,j}_3}\right)}^2 -\frac{1}{3} \sum_{i\neq j}\left(y_i+ \frac{1}{4}y_j\right) \left(x_{{i,j}_1} +x_{{i,j}_2} +x_{{i,j}_3}\right) -\frac{1}{4}\sum_{i\neq j}{y_j\left(y_i + \frac{1}{4}y_j\right)}  \\
&\leq\frac{1}{2}Y -3Y\left(\frac{1}{10}-\delta\right) + (1- Y) -\frac{36{(1-Y)}^2}{60} -5Y\left(\frac{1}{10}-\delta\right) \\
&= \frac{2}{5} -\frac{Y}{10} -\frac{3}{5}{Y}^2 + 8\delta Y\\
&\leq \frac{2}{5},
\end{align*}
since \(\delta \leq \frac{1}{90}\).
\end{document}